\documentclass[12pt,preprint]{elsarticle}

\usepackage{amsmath,amsthm,amsfonts,amssymb}
\usepackage{mathtools}
\usepackage{color}

\usepackage{pifont}
\usepackage{natbib}
\usepackage{geometry}
\usepackage{fleqn}
\usepackage{graphicx}
\usepackage{txfonts}
\usepackage{hyperref}

\usepackage{cite}

\geometry{
 left=30mm,
 right=30mm,
 top=30mm,
 bottom=30mm,
 bindingoffset=0mm}

\newtheorem{theorem}{Theorem}[section]
\newtheorem{lemma}[theorem]{Lemma}
\newtheorem{corollary}[theorem]{Corollary}

\theoremstyle{remark}
\newtheorem{remark}[theorem]{Remark}

\newcommand{\e}{_\varepsilon}

\newcommand{\eps}{{\varepsilon}}
\newcommand{\ds}{\displaystyle}
\renewcommand{\a}{\alpha}
\renewcommand{\b}{\beta}

\renewcommand{\d}{\hspace{2pt}\mathrm{d}}

\newcommand{\suml}{\sum\limits}
\newcommand{\intl}{\int\limits}
\newcommand{\liml}{\lim\limits}

\renewcommand{\phi}{\varphi}

\newcommand{\HS}{\mathcal{H}}           % symbol for Hilbert space
\newcommand{\A}{\mathcal{A}}    % what letter for the operator?
\renewcommand{\a}{\mathfrak{a}}    % what letter for the form?
\newcommand{\Id}{\mathrm{I}} % identity operator

\DeclareMathOperator{\dom}    {dom}
\DeclareMathOperator{\dist}   {dist}

\DeclareMathOperator{\capty}  {cap}

\newcommand{\clo}[1]{\overline{{#1}}} %  symbol for closure
\newcommand{\abs}[2][{}]{\lvert{#2}\rvert_{{#1}}}    % abs value

\newcommand{\R}{\mathbb{R}} % symbol for real numbers
 % symbol for complex numbers
\newcommand{\N}{\mathbb{N}} % symbol for natural numbers
\newcommand{\Z}{\mathbb{Z}} % symbol for integers

\renewcommand{\L}{\mathsf{L}^2} % symbol for \L space
\renewcommand{\H}{\mathsf{H}^1} % symbol for \H space
 % symbol for \H_0 space 
 % symbol for C space 

\newcommand{\restr}[1]{{\restriction}_{#1}} % symbol for map restriction
\newcommand{\Hausdorff}{\mathrm{H}}  % index for Hausdorff distance

\newcommand{\s}{\mathrm{s}}

\allowdisplaybreaks

\begin{document}

\begin{frontmatter}

\title{ $\delta'$-interaction as a limit of a thin Neumann waveguide with transversal window}

\author[a1]{Giuseppe Cardone}
\ead{giuseppe.cardone@unisannio.it}
\address[a1]{Department of Engineering, University of Sannio, Corso Garibaldi 107, 82100 Benevento, Italy}
\cortext[cor1]{Corresponding author}

\author[a2]{Andrii Khrabustovskyi\corref{cor1}}
\ead{khrabustovskyi@math.tugraz.at}
\address[a2]{ Institute of Computational Mathematics, Graz University of Technology, Steyrergasse 30, 8010 Graz, Austria}

\begin{abstract}
We consider a waveguide-like domain consisting of two thin straight tubular domains connected through a tiny window. The perpendicular size of this waveguide is of order $\varepsilon$. Under the assumption that the window is appropriately scaled we prove that the Neumann Laplacian on this domain converges  in (a kind of) norm resolvent sense as $\varepsilon\to 0$ to a one-dimensional Schr\"odinger operator  corresponding to a $\delta'$-interaction of a non-negative strength. We estimate the rate of this convergence, also we prove the convergence of spectra.
\end{abstract}

\begin{keyword}
$\delta'$-interaction\sep thin waveguide\sep Neumann Laplacian\sep norm resolvent convergence\sep operator estimates\sep spectral convergence
\end{keyword}

\journal{a journal}
\end{frontmatter}

\section{Introduction\label{sec1}}

In this paper we address the problem of a geometrical approximation of
a special class of solvable models in quantum mechanics. 
Below we introduce this problem in more details.

\emph{Solvable models} describe the motion of a particle in a
potential being supported at a discrete (finite or infinite) set of
points. The term ``solvable'' reflects the fact that their mathematical and physical
quantities (spectrum, eigenfunctions, resonances, etc.) can be determined explicitly.
Note that in the literature such models are also called \emph{point interactions}.
We refer to the monograph \citep{AGHH05} for a comprehensive introduction and a detailed list of references on this topic. 

The classical example in this area is the famous Kronig-Penney model describing an electron moving
in a crystal lattice. Its mathematical representation is the one-dimensional 
Schr\"odinger operator with a singular potential supported on $\Z:=\{0,\pm1,\pm2,\dots\}$:
\begin{gather}\label{delta-formal}
-{\d^2 \over \d z^2} + \alpha\suml_{k\in\Z}\delta(\cdot -{k}),\ \alpha\in\R\cup\{\infty\},
\end{gather}
where $\delta(\cdot -{k})$ is the Dirac delta-function supported at  $k $. The formal expression \eqref{delta-formal} can be realized as a self-adjoint operator    in $\L(\R)$ 
with the action  $ -(u\restriction_{\R\setminus \Z})'' $  and the domain consisting of $\mathsf{H}^2(\R\setminus \Z)$-functions satisfying the following   conditions at   $k\in\Z$:
\begin{gather}\label{delta-conditions}
u(k-0) = u(k + 0),\quad u'(k+0)-u'( k-0)=\alpha u( k\pm 0).
\end{gather}
One says that conditions \eqref{delta-conditions} correspond to the $\delta$-interaction at the points $k $ of the strength $\alpha$. 

In the current paper we deal with another well-known model -- the so-called $\delta'$-interactions in which
the role of the values of functions and their derivatives are switched comparing with \eqref{delta-conditions}. Namely, $\delta'$-interaction at the point $z\in\R$ is given by the following coupling conditions:
\begin{gather}\label{delta'-conditions}
u'(z -0) = u'(z  + 0),\quad u(z +0)-u(z -0)=\beta u'(z\pm 0 ),\ \beta\in \R\cup\{\infty\}.
\end{gather}
One says that conditions \eqref{delta'-conditions} correspond to the $\delta'$-interaction at $z$ of the strength $\beta$. The special case $\beta=0$ leads to the one-dimensional Laplacian   in $\L(\R)$.
The case $\beta=\infty$ leads to the decoupling with the Neumann boundary conditions at $z\pm 0$.

The first rigorous treatment of $\delta'$-interactions was made in \citep{GH87}.
Further investigations were carried out in  \citep{AN03,AN06,BN13,BSW95,EKMT14,E95,GO08,GO10,KM10,KM14,M96,N03,S86} (see also \citep{BLL13} for the multidimensional version of $\delta'$-interactions).

The term \emph{$\delta'$-interaction} has the following justification.
Let $\mathcal{A}$ be the operator in $\L(\R)$ given by 
$(\A u)\restriction_{\R^\pm}=-(u\restriction_{\R^\pm})''$
and the domain consisting of $\mathsf{H}^2(\mathbb{R}\setminus\{0\})$-function satisfying \eqref{delta'-conditions} at $z=0$. It was shown in \citep{S86} that $\A$
represents a self-adjoint realization of the formal differential expression
$$-{\d^2\over \d x^2}+\beta\langle\cdot,\delta'\rangle\delta',$$
where $\langle\phi,\delta'\rangle$ denotes the action  of the distribution $\delta'$ on the test function $\phi$.
\medskip

It is well-known (see \citep[Sec.~1.3.2]{AGHH05}) that operators describing $\delta$-interactions can  be approximated in the norm resolvent topology by Schr\"odinger operators with \emph{regular} potentials having $\delta$-like profile. Therefore, sometimes instead  of the term \emph{$\delta$-interactions} one uses the name \emph{$\delta$-potentials}. 
In contrast,  $\delta'$-interactions cannot be
obtained using families of scaled zero-mean potentials\footnote{
That is, one cannot approximate  Schr\"odinger operators with $\delta'$-interactions by the regular Schr\"odinger operators of the form $-{\d^2\over \d x^2}+V\e$ with $V\e(x)={\b\over\eps^2}V(x/\eps)$, where 
$V\in\mathsf{C}_0^\infty(\R)$, $\int_{\R} V(x)\d x=0$, $\int_{\R} xV(x)\d x=-1$ (these properties imply $V\e\rightharpoonup\beta\delta'$ in $D'(\R)$ as $\eps\to 0$). }, see \citep{GM09,GH10,S86} for more details.
Nevertheless, one can approximate $\delta'$-interactions by Schr\"odinger operators with a triple of properly scaled $\delta$ potentials \citep{CS98} and even by Schr\"odinger operators with regular (but not with a $\delta'$-type profile) potentials \citep{AN00,ENZ01}.

We wish to contribute to the understanding of ways 
how point interactions can be approximated by more realistic models.
Above we have discussed approximations via Schr\"odinger operators with regular  potentials.
Another option is to use geometrical tools, namely, to treat approximations by Laplace-type 
operators on thin domains with waveguide-like geometry. 
In the current paper we address this question for  $\delta'$-interactions and Neumann Laplacians. 
Evidently, such approximations are possible only if we are given with  $\delta'$-interactions generating non-negative operators. The later holds iff  $\beta\in[0,\infty]$.

Apparently, for the first time similar problem was treated in \citep{EP05,KZ03}.
The authors of \citep{KZ03} (resp., \citep{EP05}) studied asymptotic properties of the  Neumann Laplacian (resp., the Laplace-Beltrami operator) on  narrow tubular domains (reps., tubular manifolds) consisting of two straight parts and certain ``connector''\footnote{In fact, the authors of \citep{EP05,KZ03} inspected even more general domains. In particular, they considered domains consisting of $k$ ($k\geq 2$) straight parts joined in a ``bunch'' -- in this case the limiting operator ``lives'' on a star graph with $k$ edges.}. They focused on the case, when the ``connector'' shrinks to a point  \emph{slower} comparing with a shrinking of the tubular parts. 
In the most interesting case, the obtained limiting operator corresponds to the $\delta$-coupling, but with a coupling constant depending on a spectral parameter.

Geometrical approximations of $\delta'$-interactions were considered in \citep{EP09}, but instead of Laplace-type operators the authors used Schr\"odinger operators ---  the desired coupling was generated by a suitably chosen potential (a la in \citep{CS98,ENZ01}), and not by geometrical peculiarities. 
\medskip

Below we sketch our main results.
Let $\eps>0$ be a small parameter. We consider the domains $\Omega^\pm\e\subset \mathbb{R}^{n}$ ($n\geq 2$)  of the form
$$\Omega\e^-= (L_-,0)\times \tilde S\e,\quad \Omega\e^+= (0,L_+)\times \tilde S\e,$$
where $-\infty\leq L^-<0<L^+\leq \infty$, $\tilde S\e$ is the $\eps$-homothetic image of the domain $\tilde S\subset \mathbb{R}^{n-1}$. The domains $\Omega\e^-$ and $\Omega\e^+$ are disjoint, and we connect them by means of the small ``window'' $$D\e: = \{0\}\times \tilde D\e,$$ where $\tilde D\e$ is the $d\e$-homothetic image of the domain $\tilde D\subset \mathbb{R}^{n-1}$ with $d\e\to 0$ as $\eps\to 0$; we also assume that $\tilde D\e\subset \tilde S\e$. The obtained domain 
$\Omega\e:=\Omega^-\e\cup\Omega^+\e\cup D\e$
is depicted on Figure~\ref{fig1} (for the case $n=2$).
Note, that  $\Omega\e$ is allowed to be unbounded.

\begin{figure}[h]
	\begin{center}
		\begin{picture}(350,60)
		\includegraphics[scale=0.65]{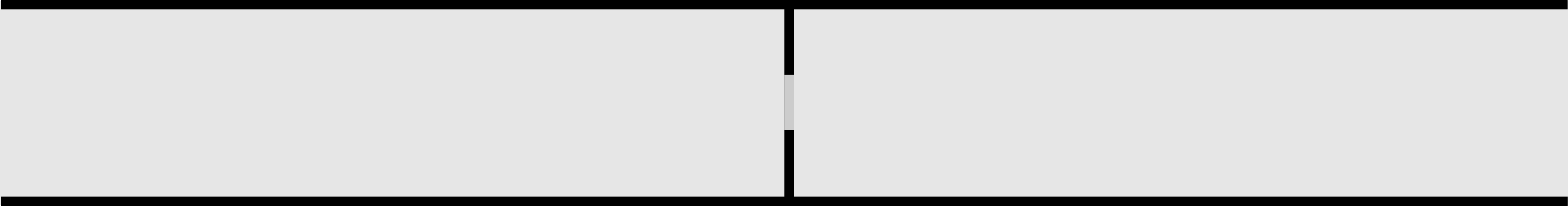}
		\put(-40,20){$\Omega\e^+$}
		\put(-315,20){$\Omega\e^-$}
		\put(-195,35){$D\e$}
		\put(-182,32){\vector(1,-1){10} }
		\put(-122,30){\vector(0,1){14} }
		\put(-122,17){\vector(0,-1){14} }
		\put(-124,23){$_{\sim\eps}$}
		
		\put(-162,23){\vector(0,1){7} }
		\put(-162,24){\vector(0,-1){7} }
		\put(-159,24){$_{\sim d\e}$}
		
		\put(10,10){\vector(1,0){15} }
		\put(10,10){\vector(0,1){15} }
		
    	\put(22,0){$^z$}
    	\put(4,16){$^{\tilde x}$}

		\end{picture}
	\end{center}
	\caption{The waveguide $\Omega\e$\label{fig1}}
\end{figure}

We consider the operator $\A\e=-\Delta_{\Omega\e}+V\e$, where  $-\Delta_{\Omega\e}$ is the Neumann Laplacian on $\Omega\e$ and $V\e$ is a real-valued bounded function. Our goal is to describe the behaviour of the resolvent of $\A\e$ and its spectrum as $\eps\to 0$. 
Note, that the case $V\e\equiv 0$ is not excluded; the potential $V\e$ plays no role for generating  the $\delta'$-interaction.  We treat more general operators due to the reasons explained below.

Our main results is as follows (see Theorem~\ref{th1}).
Assume, that the following limit, either finite or infinite, exists: 
$$\gamma:=\lim_{\eps\to 0}{\capty(D\e)\over   {\mu_{n-1}(\tilde S\e)}}.$$ 
Here $\capty(D\e)$ is the capacity of $D\e$ and $\mu_{n-1}(\tilde S\e)$ is the Lebesgue measure of $\tilde S\e\subset\mathbb{R}^{n-1}$. 
Also assume that $V\e$ converges in a suitable sense to the  function
$V:(L_-,L_+)\to \R$ as $\eps\to 0$ (see condition \eqref{convV} below; obviously, it holds if $V\e=V\equiv 0$).  
Then  $\A\e$ converges in (a kind of) the norm resolvent sense  to the operator  $\A^\gamma$, which acts in $\L(L_-,L_+)$ and is defined by the operation $-{\d^2 \over \d z^2}+V$ on $(L_-,L_+)\setminus\{0\}$, the Neumann conditions at $L_\pm$ (if $|L_\pm|<\infty$) and the $\delta'$-coupling at $z=0$ of the strength $4\gamma^{-1}$.
We also estimate the rate of this convergence. 

Of course we are not able to use the classical notion of the resolvent convergence since the resolvents of $\A\e$ and $\A^\gamma$ act in different Hilbert spaces (respectively, $\L(\Omega\e)$ and $\L(L_-,L_+)$). Therefore we use a suitably modified definition which involves some identification operators between those spaces.

Using the above result we then establish the Hausdorff convergence of spectra and (if $|L_\pm|<\infty$) the ``index-wise'' convergence of eigenvalues; see Theorem~\ref{th2}, Corollary~\ref{coro1} and Theorem~\ref{th3}. 
Note that if $V\e=V\equiv 0$ and $|L_\pm|<\infty$
then $\sigma(\A\e)=\sigma(\A^\gamma)=[0,\infty)$ and the convergence of spectra becomes trivial.
This is the main reason why we treat more general Schr\"odinger operators with non-zero regular potentials. 

\subsection*{Applications}
 
The obtained results can be word-by-word translated to the case of finitely many windows or to the case of a sequence of identical windows, distributed periodically along the unbounded waveguide (see Figure~\ref{fig2}). 
In the later case we arrive (as $\eps\to 0$) on the Schr\"odinger operator with a periodic sequence of $\delta'$-interactions. It is known (see \citep[Th.~3.6]{AGHH05}) that the spectrum of this operator  has  infinitely many gaps provided $\beta\not= 0$. Then, using the Hausdorff convergence of spectra, we conclude that for any $m\in\N$  the Neumann Laplacian on such periodic domains has at least $m$ gaps provided $\eps$ small enough.

\begin{figure}[h]
	\begin{center}
		\includegraphics[scale=0.4]{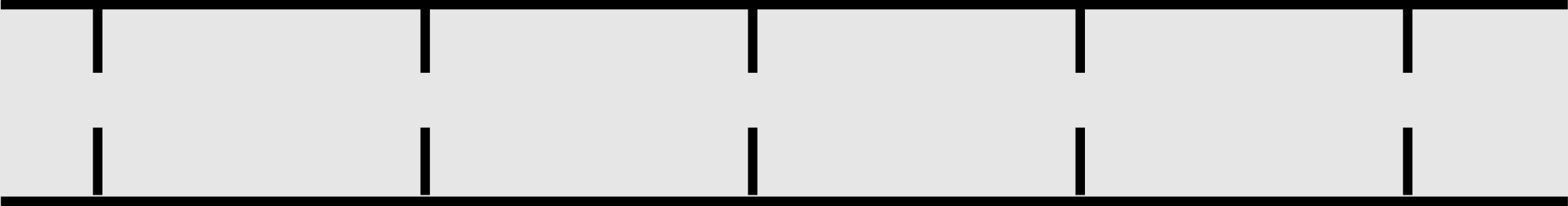}
	\end{center}
	\caption{Periodic waveguide\label{fig2}}
\end{figure}

	Note, that the idea to use periodic waveguide-like domains with period cells being  connected through small windows (or certain small passages) in order to create spectral gaps is not new -- see, e.g., \citep{BRT14,Bo15,Na10,Pa10,P03,Y03}. In these papers the transversal diameter of the waveguide is \emph{fixed} (in our paper it tends to zero as $\eps\to 0$). This leads to a complete decoupling as the windows diameters go to zero -- the limit operator is simply a direct sum of Laplacians on a sequence of identical domains. The spectrum of this decoupled operator is purely point, each eigenvalue has infinite multiplicity. Consequently, the spectrum of the Neumann Laplacian on such a domain has small bands (they shrink to those eigenvalues), while the gaps are relatively large (they tend to the gaps between those eigenvalues). 
	
	In contrast, the spectrum of the Neumann Laplacian on the domain $\Omega\e$ considered in the present paper has bands and gaps of relatively comparable size that follows easily from the spectral properties of the limiting operator $\A^\gamma$, we again refer to \citep[Chapter~III.3, Th.~3.6]{AGHH05} for the precise description of $\sigma(\A^\gamma)$.\smallskip

The paper is organized as follows. In Section~\ref{sec2} we set the problem precisely and formulate the main results.
In Section~\ref{sec3} we present some known (except Theorem~\ref{th-Haus}, which is apparently new) abstract theorems for studying the convergence of operators in varying Hilbert spaces.
Using them we prove the main results in Section~\ref{sec4}.

\section{Setting of the problem and the main result\label{sec2}}

Let $\eps\in (0,1)$  be a small parameter. 
Let $d\e$ be a positive number such that
\begin{gather}\label{de}
d\e\leq\eps.
\end{gather}

For  $n\in\mathbb{N}\setminus\{1\}$ we denote by $\tilde x=(x_1,\dots,x_{n-1})$ and $x=(\tilde x,z)$  the Cartesian coordinates in $\mathbb{R}^{n-1}$ and $\mathbb{R}^{n}$, respectively. Let $\tilde S$ and $\tilde D$ be  open bounded domains in $\mathbb{R}^{n-1}$ having Lipschitz boundaries and
satisfying
\begin{gather}\label{omegaD}
 \overline{B(\tilde D)}\subset \tilde S,
\end{gather}
where $B(\tilde D)\subset \mathbb{R}^{n-1}$ is the smallest ball containing $\tilde D$. 
For simplicity, we assume that its center coincides with the origin.

Let $$-\infty\leq L^-<0<L^+\leq \infty.$$
We denote  
\begin{gather*}
\Omega^-\e=\left\{x=(\tilde x,z)\in\mathbb{R}^n:\ \eps^{-1}\tilde x\in  \tilde S,\ L^-< z< 0\right\},
\\
\Omega^+\e=\left\{x=(\tilde x,z)\in\mathbb{R}^n:\ \eps^{-1}\tilde x\in   \tilde S,\ 0<z<L^+ \right\},
\\
D\e=\left\{x=(\tilde x,z)\in\mathbb{R}^n:\ (d\e)^{-1}\tilde x\in   \tilde D,\  z=0\right\},
\end{gather*}
In view of \eqref{de}-\eqref{omegaD},   $D\e\subset  \partial \Omega^{\pm}\e\cap \{x=(\tilde x,z)\in\mathbb{R}^n:\ z=0\}$. 
Finally, we introduce the waveguide $\Omega\e$ consisting of the cylinders $\Omega^\pm\e$  and the ``window'' $D\e$ connecting them (see Figure~\ref{fig1}):
$$\Omega\e=\Omega^-\e\cup\Omega^+\e\cup D\e.$$

Further, we will also use the notations
$$\Omega=(L^-,L^+),\quad \Omega^-=(L^-,0),\quad \Omega^+=(0,L^+),\quad \tilde S\e=\eps \tilde S.$$

We denote
\begin{gather}
\label{gamma-eps}
\gamma\e:={\capty(D\e)\over   {\mu_{n-1}(\tilde S\e)}}.
\end{gather}
Hereinafter $\mu_{d}(B)$ stands for the Lebesgue measure of $B\subset\R^d$, and by $\capty(D\e)$ we denote the capacity of $ {D}\e$ (its definition is given below). 
We assume that the  limit
\begin{gather}\label{gamma}
\gamma:=\liml_{\eps\to 0}\gamma\e,
\end{gather}
either finite or infinite, exists.

Below we recall the definition of the capacity, for more details see, e.g.,~\citep{T96}. Note, that $\capty(D\e)>0$ despite $\mu_n(D\e)=0$.

For $n\geq 3$  the capacity of the set $D\subset\R^n$ is defined via
\begin{gather}
  \label{cap-min}
  \capty(D)=\min_\psi\int_{\R^n}|\nabla \psi(x)|^2\d x,
\end{gather}
where the minimum is taken over $\psi\in \mathsf{C}_0^\infty(\R^n)$ being
equal to $1$ on a neighbourhood of $D$ (with the neighbourhood depending upon $\psi$).
For $n=2$ the right-hand-side of~\eqref{cap-min} is zero for an arbitrary domain $D$, and we use a modified definition:
\begin{gather*}
  \capty(D)=\min_\psi\int_{B_1}|\nabla \psi(x)|^2\d x,
\end{gather*}
where $B_1$ is the unit ball, which is concentric with the smallest
ball $B(D)$ containing $D$ (here we assume that $D$ is small enough
so that   $\overline{B(D)}\subset B_1$), 
the minimum is taken over $\psi\in \mathsf{C}_0^\infty(B_1)$ being
equal to $1$ on a neighbourhood of $D$.

Due to  simple rescaling arguments,
\begin{gather}\label{cap-as1}
\capty(D\e)=(d\e)^{n-2}\capty(D),\ n\geq 3,
\end{gather}
where $D:=(d\e)^{-1}D\e$.
Moreover, one can show (see, e.g.,~\citep[Lemma~3.3]{CDG02}) that
\begin{gather}\label{cap-as2}
\capty(D\e)\sim 2\pi |\ln d\e|^{-1}\text{ as }\eps\to 0,\ n=2.
\end{gather}

Let us introduce the operator $\A\e$ which will be the main object of our interest in this paper. 
Let $\{V\e\in\mathsf{L}^\infty(\Omega\e),\ \eps>0\}$ be a family of real-valued functions satisfying
\begin{gather}\label{supV} 
\mathrm{sup}_\eps\|V\e\|_{\mathsf{L}^\infty(\Omega\e)}<\infty.
\end{gather}
We denote by $\a\e$ the sesquilinear form in the Hilbert space $\L(\Omega\e)$ defined by
\begin{gather*}
\a\e[u,v]=\int_{\Omega\e}\left(\nabla u\cdot\nabla\bar{v} +V\e\, u\bar v\right) \d x
\end{gather*}
with the form domain
$\dom(\a\e)=\H(\Omega\e).$
The form $\a\e$ is densely defined, lower semibounded  and closed. 
By the first representation theorem  \citep[Chapter 6, Theorem 2.1]{K66}  there exists the unique self-adjoint  operator $\A\e$ associated with this form, i.e.
\begin{gather*}
(\A\e u,v)_{\L(\Omega\e)}= \a\e[u,v],\quad\forall u\in
\dom(\A\e),\ \forall  v\in \dom(\a\e).
\end{gather*}
If $V\e=0$ then $\A\e$ is the Neumann Laplacian on $\Omega\e$.

To simplify the presentation we restrict ourselves to the case
$$V\e(x)\geq 0$$
(and hence $\A\e$ are non-negative operators). The general case needs slight modifications.\medskip

Our first goal is to prove a kind of the norm resolvent
convergence for the operator $\A\e$.
Since $\Omega\e$ shrinks to $\Omega$ as $\eps\to 0$ we expect that the suitable limit operator (we denote it $\A^\gamma$) acts in the space $\L(\Omega)$. 
Of course, the usual notion of the norm resolvent convergence cannot be applied here since the resolvent of $\A\e$ and the  resolvent of $\A^\gamma$ ``live''
in the different Hilbert spaces spaces $\L(\Omega\e)$ and $\L(\Omega)$, respectively, and therefore we are not able to evaluate their difference. 
Therefore, the classical definition should be appropriately modified. The modified definition should be
adjusted in such a way that it still implies the convergence of spectra as it takes place in a fixed Hilbert space (see, e.g., the classical Kato's monograph \citep{K66} for more details).

The standard approach (see, e.g., \citep{IOS89,V81}) is to treat the operator $T\e:\L(\Omega)\to\L(\Omega\e)$,
\begin{gather*}
T\e:=R\e J\e - J\e R,
\end{gather*}
where $R\e=(\A\e+\Id)^{-1}$ and $R=(\A^\gamma+\Id)^{-1}$ are the resolvents of $\A\e$ and $\A^\gamma$, respectively, and 
$J\e:\L(\Omega)\to \L(\Omega\e)$ is a suitable bounded linear operator satisfying
\begin{gather*}
\forall f\in\L(\Omega):\ \liml_{\eps\to 0}\|J\e f\|_{\L(\Omega\e)} =\|f\|_{\L(\Omega)}
\end{gather*}  
(so, roughly speaking, the operator $J\e$ is  ``almost'' isometric   for small $\eps$).

In our context, the natural choice for the operator $J\e$  is  
\begin{gather}\label{J}
(J\e f)(x)=
{1\over \sqrt{\mu_{n-1}(\tilde S\e)}}f(z),\quad x=(\tilde x,z)\in\Omega\e
\end{gather}
Evidently, for each $f\in \L(\Omega)$ 
\begin{gather}\label{CS} 
\|J\e f \|_{\L(\Omega\e)}=\|f \|_{\L(\Omega)}.
\end{gather}

Alternatively, one can study the operator $\check T\e:\L(\Omega\e)\to\L(\Omega)$ defined by
\begin{gather*}
\check T\e:=\check J\e R\e - R \check J\e
\end{gather*}
with an appropriate linear bounded operator $\check J\e:\L(\Omega\e)\to \L(\Omega)$ (again ``almost'' isometric in a certain sense, cf.~\eqref{almost-is}).  In our case, the natural choice for $\check J\e$ is
\begin{gather}
\label{J'}
(\check J\e u)(z)=
{1\over\sqrt{\mu_{n-1}(\tilde S\e)}}\int_{\tilde S\e} u(\tilde x,z)\d \tilde x,\quad z\in\Omega.
\end{gather}
Evidently, $(J\e)^*=\check J\e$, moreover $\check J\e$ is the left inverse operator for $J\e$, i.e. $\check J\e J\e =\Id$. 
Further, using the Cauchy-Schwarz and the Poincar\'e inequalities for $\tilde S\e $, one gets  
\begin{gather}\label{CS-P1}
\forall u\in\L(\Omega\e):\ \|\check J\e u\|_{\L(\Omega)}\leq \|u\|_{\L(\Omega\e)},\\[2mm]\label{CS-P2}
\forall u\in\H(\Omega\e):\ \|u\|^2_{\L(\Omega\e)}\leq \|\check J\e u\|^2_{\L(\Omega)} + \eps^2 \lambda^{-1}_{\tilde S}\|\nabla u\|^2_{\L(\Omega\e)},
\end{gather}
the constant $\lambda_{\tilde S}$ is the smallest non-zero eigenvalue of the Neumann Laplacian on $\tilde S$.  
Hence
\begin{gather}
\label{almost-is}
\|u\e\|^2_{\L(\Omega\e)}= \|\check J\e u\e\|^2_{\L(\Omega)}+o(1)\text{ as }\eps\to 0
\end{gather}
for any family $(u\e)_{\eps>0}$ with $u\e\in \H(\Omega\e)$ satisfying $\sup_{\eps>0}\|\nabla u\e\| _{\L(\Omega\e)}<\infty$.

\medskip

Below we introduce the expected limiting operator $\A^\gamma$.
It is convenient to define it via a form approach. 

We start from the case $\gamma<\infty$. Let $ \a^\gamma $ be a sesquilinear form in $\L(\Omega)$ 
defined by
\begin{multline*}
\a^\gamma[f,g]=\int_{\Omega^+} f'(z)\overline{g'(z)} \d z+\int_{\Omega^-} f'(z)\overline{g'(z)} \d z+\int_\Omega V(z)f(z)\overline{g(z)}\d z\\ +\,
{\gamma\over 4}\big(f(+0)-f(-0)\big)\overline{\big(g(+0)-g(-0)\big)},\quad 
\dom(\a^\gamma)=\H(\Omega\setminus\{0\}).
\end{multline*}
where $V\in\mathsf{L}^\infty(\Omega)$ is a real-valued function such that $V(x)\ge 0$, by $f(\pm 0)$ and $g(\pm 0)$ we denote the limiting values of $f(z)$ and $g(z)$ as $z\to \pm 0 $.
This form is non-negative and closed.
We denote by $\A^\gamma$ the self-adjoint  operator
  being associated with $\a^\gamma $.
It is easy to show that 
\begin{gather*}
\dom(\A^\gamma)=
\left\{
f\in \mathsf{H}^2(\Omega\setminus\{0\}): 
\begin{array}{l}
f'(-0)=f'(+0),\ f(+0)-f(-0)=4\gamma^{-1}f'(\pm 0),\\[1mm]
f'(L^-)=0\text{ provided }L^->-\infty,\
f'(L^+)=0\text{ provided }L^+<\infty
\end{array}
\right\},
\\[1mm] 
(\A^\gamma u)(z)=
\begin{cases}
-f''(z)+V(z),&z<0,\\
-f''(z)+V(z),&z>0,\\
\end{cases}
\end{gather*}
i.e. $\A^\gamma $ is the {Schr\"odinger operator with  the $\delta'$-interaction at $0$ of the strength $4\gamma^{-1}$}. 

In the case $\gamma=\infty$ we define $ {\A}^\infty$ as the operator acting in $\L(\Omega)$ and 
being associated with the form $$\a^\infty[u,v]=\int_{\Omega} f'(z)\overline{g'(z)}\d z + \int_{\Omega}V(z)f(z)\overline{g(z)} \d z,\quad\dom(\a^\infty)=\H(\Omega).$$ It is clear that
\begin{gather*}
\dom( {\A^\infty})=\left\{f\in \mathsf{H}^2(\Omega):\ f'(L^-)=0\text{ as }L^->-\infty,
f'(L^+)=0\text{ as }L^+<\infty\right\},\\ \A^\infty f=-f''+ V.
\end{gather*}

Of course the potentials $V\e$ and $V$ have to be close in a suitable sense in order to guarantee 
the closeness of the resolvents of the underlying operators.
Namely, as it follows from Theorem~\ref{th1} below, one needs
\begin{gather}
\label{convV}
\|V\e - \sqrt{\mu_{n-1}(\tilde S\e)}J\e V \|_{\mathsf{L}^\infty(\Omega\e)}\to 0\text{ as }\eps\to 0.
\end{gather}

We are now in position to formulate the mains results of this work. Below $\|\cdot\|_{X\to Y}$ stands for the operator norm of an operator acting between normed spaces $X$ and $Y$.
By $C$, $C_1$ etc.\ we denote generic constants being independent of $\eps$ and of functions 
standing at the estimates  where these constants occur (but they may depend on $n$, $\tilde S$, $\tilde D$, $L_+$, $L_-$, $\gamma$, $\sup\gamma\e$ or $\mathrm{sup}_\eps\|V\e\|_{\mathsf{L}^\infty(\Omega\e)}$).

\begin{theorem}\label{th1}
One has
\begin{gather*}
\left\|(\A\e+ \Id)^{-1} J\e- J\e (\A^\gamma+ \Id)^{-1}  \right\|_{\L(\Omega)\to \L(\Omega\e)}\leq  4\delta^\gamma\e,
\\[1mm]
\left\|\check J\e(\A\e+ \Id)^{-1} -  (\A^\gamma+ \Id)^{-1} \check J\e \right\|_{\L(\Omega\e)\to \L(\Omega)}\leq  6\delta^\gamma\e,
\end{gather*}
where 
\begin{gather}\label{delta}
\delta^\gamma\e=
 \|V\e - \sqrt{\mu_{n-1}(\tilde S\e)}J\e V \|_{\mathsf{L}^\infty(\Omega\e)}+
C
\left\{
\begin{array}{lll}
\eps^{1/2}|\ln\eps|,&\gamma<\infty,&n=2,\\[1mm]
\eps^{1/2},&\gamma<\infty,&n\ge 3,\\[1mm]
\eps^{1/2}+  \gamma\e ^{-1/2},&\gamma=\infty.&
\end{array}
\right.
\end{gather}
\end{theorem}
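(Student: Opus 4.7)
The plan is to apply the abstract norm-resolvent-convergence criterion from Section~\ref{sec3}: to bound $\|R\e J\e - J\e R\|$ and $\|\check J\e R\e - R\check J\e\|$ it suffices to exhibit an auxiliary identification $J^1\e:\dom(\a^\gamma)\to\dom(\a\e)$ (and a companion $\check J^1\e$ in the opposite direction) such that $J^1\e F$ is $\L(\Omega\e)$-close to $J\e F$ and the forms $\a\e[J^1\e\cdot,J^1\e\cdot]$ and $\a^\gamma[\cdot,\cdot]$ are close, uniformly on the form-norm unit ball of $\dom(\a^\gamma)$. The rates in these closeness estimates will then translate into the rate $\delta^\gamma\e$ in \eqref{delta}.

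The principal difficulty in constructing $J^1\e$ when $\gamma<\infty$ is that elements $F\in\dom(\a^\gamma)$ are allowed to jump at $z=0$, while $\dom(\a\e)=\H(\Omega\e)$ requires continuity across the window $D\e$. I will bridge the jump by a capacity-type corrector. Write $[F]:=F(+0)-F(-0)$, let $\tilde F^\pm$ be the extension of $F\restr{\Omega^\pm}$ to all of $\Omega$ by the constant value $F(\pm 0)$, and let $\phi\e\in\H(\Omega\e)$ be a smoothed minimizer of $\int_{\Omega\e}|\nabla\phi|^2\,\d x$ among $\phi$ that vanish on $\Omega\e^-\cap\{|z|>r\e\}$, equal $1$ on $\Omega\e^+\cap\{|z|>r\e\}$ (for some $r\e\sim\eps$), and remain continuous across $D\e$. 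Using the $z\mapsto -z$ symmetry and a comparison with the full-space capacity minimizer of $D\e$ on each half-space gives
\begin{gather*}
\int_{\Omega\e}|\nabla\phi\e|^2\,\d x=\tfrac{1}{4}\capty(D\e)(1+o(1)),\quad\eps\to 0,
\end{gather*}
and the identification is then defined by
\begin{gather*}
(J^1\e F)(x):=\tfrac{1}{\sqrt{\mu_{n-1}(\tilde S\e)}}\Big((1-\phi\e(x))\tilde F^-(z)+\phi\e(x)\tilde F^+(z)\Big),
\end{gather*}
so that $J^1\e F$ coincides with $J\e F$ outside a cylindrical neighbourhood of $D\e$ of axial size $\sim\eps$ and is continuous across $D\e$.

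Expanding $\a\e[J^1\e F,J^1\e F]$, the axial-derivative term recovers $\int_\Omega|F'|^2\,\d z$ up to an error supported near $D\e$, the potential term is matched via \eqref{convV}, and the dominant new contribution is
\begin{gather*}
\tfrac{1}{\mu_{n-1}(\tilde S\e)}\,|[F]|^2\int_{\Omega\e}|\nabla\phi\e|^2\,\d x=\tfrac{\gamma\e}{4}|[F]|^2(1+o(1)),
\end{gather*}
which converges to the $\delta'$-term of $\a^\gamma$ by definition of $\gamma$. The cross/remainder terms are controlled by the 1D Sobolev estimate $|\tilde F^\pm(z)-F(\pm 0)|\le|z|^{1/2}\|F'\|_{\L(\Omega^\pm)}$ applied on the support of $\nabla\phi\e$; combined with the capacity bound this produces the $\eps^{1/2}$ factor in \eqref{delta}. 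The dual estimate for $\check J\e R\e-R\check J\e$ follows from an analogous construction on the $\eps$-side. In the case $\gamma=\infty$ the domain $\dom(\a^\infty)=\H(\Omega)$ already enforces continuity at $0$, so I may simply take $J^1\e F=J\e F$; the only extra ingredient is a window-type Poincar\'e inequality
\begin{gather*}
\Big|\tfrac{1}{\mu_{n-1}(\tilde S\e)}\int_{\tilde S\e}\big(u(\tilde x,0^+)-u(\tilde x,0^-)\big)\,\d\tilde x\Big|^2\lesssim\tfrac{1}{\capty(D\e)\,\mu_{n-1}(\tilde S\e)}\int_{\Omega\e}|\nabla u|^2\,\d x,
\end{gather*}
which yields the $\gamma\e^{-1/2}$ contribution.

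I expect the main technical hurdle to be the precise capacity identity $\int_{\Omega\e}|\nabla\phi\e|^2\,\d x=\tfrac{1}{4}\capty(D\e)(1+o(1))$ in the tubular geometry (rather than in $\R^n$), together with the $n=2$ case where the logarithmic capacity asymptotics \eqref{cap-as2} combined with the cut-off scale $r\e\sim\eps$ force the extra $|\ln\eps|$ factor in the rate. The 1D Sobolev estimate yielding the $\eps^{1/2}$ factor is comparatively routine, but must be tracked consistently in both the form-norm and $\L$-norm closeness bounds, and the window Poincar\'e inequality above must be proved in a form sharp enough to deliver the $\gamma\e^{-1/2}$ remainder.
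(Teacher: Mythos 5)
Your plan is, at the level of ideas, essentially the paper's own: invoke an abstract criterion for norm resolvent convergence of operators in varying Hilbert spaces (the paper's Theorem~\ref{th0}, from Post), build interpolation operators $J^1\e,\check J^1\e$ between the form domains, reduce everything to a form-closeness estimate, and for $\gamma<\infty$ bridge the jump $[F]$ across $D\e$ by a capacity-type corrector whose Dirichlet energy produces the coefficient $\frac{\gamma\e}{4}$. Your corrector is a valid variant --- the paper multiplies the \emph{explicit} free-space capacity potential $\psi\e$ (harmonic off $D\e$, $\psi\e=1$ on $D\e$, decaying) by a cutoff, rather than taking the implicit tube minimizer $\phi\e$ --- but it shifts the technical burden: the paper's choice comes with explicit pointwise decay bounds (Lemma~\ref{lemma-Hest}) and the flux identity $\capty(D\e)=2\int_{D\e}\partial_z\psi\e|_{z=-0}\,\d\s$ (Lemma~\ref{another-cap}), which are what extract the quantitative $\eps^{1/2}$ and $\eps^{1/2}|\ln\eps|$ rates in \eqref{delta}. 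Your route requires proving $\int_{\Omega\e}|\nabla\phi\e|^2=\tfrac14\capty(D\e)(1+o(1))$ in the tube \emph{with an explicit rate}; you flag this but do not supply it, and without the rate you cannot reach the quantitative statement.

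For $\gamma=\infty$ there is a concrete error. The window Poincar\'e inequality you state,
\begin{gather*}
\Big|\tfrac{1}{\mu_{n-1}(\tilde S\e)}\int_{\tilde S\e}\big(u(\tilde x,0^+)-u(\tilde x,0^-)\big)\,\d\tilde x\Big|^2 \lesssim \tfrac{1}{\capty(D\e)\,\mu_{n-1}(\tilde S\e)}\int_{\Omega\e}|\nabla u|^2\,\d x,
\end{gather*}
is dimensionally inhomogeneous (the left side is dimensionless, the right scales as $\ell^{-(n-1)}$) and is weaker by a factor $\mu_{n-1}(\tilde S\e)^{-1}$ than what is needed. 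The estimate that actually feeds the $\gamma\e^{-1/2}$ term --- obtained in the paper from Lemma~\ref{lemma-diff} plus continuity of $u$ across $D\e$ --- is
\begin{gather*}
\Big|\tfrac{1}{\mu_{n-1}(\tilde S\e)}\int_{\tilde S\e}\big(u(\tilde x,0^+)-u(\tilde x,0^-)\big)\,\d\tilde x\Big| \le \tfrac{C}{\sqrt{\capty(D\e)}}\,\|\nabla u\|_{\L(Y\e)},
\end{gather*}
equivalently $|\check J\e u(+0)-\check J\e u(-0)|\le C\gamma\e^{-1/2}\|\nabla u\|_{\L(Y\e)}$. Your version only gives $|\check J\e u(+0)-\check J\e u(-0)|\lesssim\capty(D\e)^{-1/2}\|\nabla u\|$, and since $\capty(D\e)\to 0$ this does not even tend to zero, so your $\delta^\gamma\e$ would diverge. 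Also note that for $\gamma=\infty$ the nontrivial object is $\check J^1\e$, not $J^1\e$: $\check J\e u$ is in general discontinuous at $z=0$, hence not in $\dom(\a^\infty)=\H(\Omega)$; the paper repairs this by interpolating $\check J\e u$ linearly over $(-\eps,\eps)$, a step your proposal leaves implicit.
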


As an important application of this norm resolvent convergence type result, we establish the 
Hausdorff convergence of spectra. 
For two compact sets $X,Y\subset\R$  we define the \emph{maximal outside distance} and \emph{maximal inside distance}  of $X$ to $Y$ by
$$\dist_{\rm out}(X,Y):=\sup_{x\in X} \dist(x,Y),\quad \dist_{\rm in}(X,Y):=\dist_{\rm out}(Y,X),$$
where $\dist(x,Y)=\inf_{y\in Y}|x-y|$.
Finally, we define the \emph{Hausdorff distance} between   $X$ and $Y$:
$$\dist_\Hausdorff(X,Y):=
\max\left\{\dist_{\rm out}(X,Y),\ \dist_{\rm in}(X,Y)\right\}.$$ 
Note, that
\begin{itemize} 
\item
$\dist_{\rm out}\left(X\e,X\right)\to 0\text{ as }\eps\to 0$ iff
for each $x \in \mathbb{R}\setminus X$ there exists $d>0$ such that $X\e\cap\{x\e:\ |x\e-x|<d\}=\emptyset$. 

\item  $\dist_{\rm in}\left(X\e,X\right)\to 0\text{ as }\eps\to 0$ iff
for each $x\in X$ there exists a family  $\{x\e\}\e$ with $x_\eps \in X_\eps$ such that
  $\lim_{\eps\to 0}x\e=x$,
\end{itemize}
where  $\{X\e\}_{\eps}$ is a family of compact sets in $\R$, $X\subset\R$ is also a compact set.

Since the spectra of $\A\e $ and $\A^\gamma$ are noncompact sets, it is reasonable to measure the Hausdorff distance between  the spectra of their resolvents
$( \A\e  +\Id)^{-1}$ and $ (\A^{ \gamma} +\Id)^{-1}$.
Note, that by spectral mapping theorem
$\sigma((\A\e  +\Id)^{-1})=\overline{(\sigma(\A\e )+1)^{-1}},$ $\sigma((\A^{ \gamma} +\Id)^{-1})=\overline{(\sigma(\A^{ \gamma})+1)^{-1}}.$

Recall, that $\lambda_{\tilde S}$ is the smallest non-zero eigenvalue of the Neumann Laplacian on $\tilde S$ and $\delta\e^\gamma$ is defined by \eqref{delta}.

\begin{theorem}\label{th2}
For each $d\in (0,1)$ one has
$$\dist_{\rm out}\left(\sigma((\A\e  +\Id)^{-1})\cap[d,1],\,\sigma((\A^\gamma   +\Id)^{-1})\right)\leq 
{6 \delta\e^\gamma\over  \sqrt{1-(\lambda_{\tilde S})^{-1}\eps^2(d^{-1}-1)}}$$
provided $\eps<\left({\lambda_{\tilde S} d\over 1-d}\right)^{1/2}$.
Moreover,
$$\dist_{\rm in}\left(\sigma((\A\e  +\Id)^{-1}),\sigma((\A^\gamma  +\Id)^{-1}))\right)\leq  4\delta\e^\gamma.$$ 
\end{theorem}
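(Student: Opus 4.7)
The plan is to invoke Weyl's characterisation of the spectrum of a bounded self-adjoint operator, namely that $\lambda\in\sigma(T)$ iff there exists a sequence $(\psi_k)$ with $\|\psi_k\|=1$ and $\|(T-\lambda)\psi_k\|\to 0$, and apply it to the two resolvents. Approximate eigenvectors will be transported between $\L(\Omega)$ and $\L(\Omega\e)$ using $J\e$ and $\check J\e$, and the norm resolvent estimates of Theorem~\ref{th1} will convert the transported sequences into Weyl sequences for the target operator.

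\textbf{Inside distance.} Fix $\lambda\in\sigma((\A^\gamma+\Id)^{-1})$, choose a unit-norm Weyl sequence $(f_k)\subset\L(\Omega)$ for $(\A^\gamma+\Id)^{-1}$ at $\lambda$, and push it forward via $J\e$. The norm is preserved by \eqref{CS}, and the triangle inequality combined with the first estimate of Theorem~\ref{th1} yields
\[
\|((\A\e+\Id)^{-1}-\lambda)J\e f_k\|_{\L(\Omega\e)}\le 4\delta\e^\gamma + \|((\A^\gamma+\Id)^{-1}-\lambda)f_k\|_{\L(\Omega)}.
\]
Letting $k\to\infty$ gives $\dist(\lambda,\sigma((\A\e+\Id)^{-1}))\le 4\delta\e^\gamma$, and taking a supremum over $\lambda$ delivers the stated bound on $\dist_{\rm in}$.

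\textbf{Outside distance.} This direction is more delicate because $\check J\e$ can shrink norms. For $\lambda_\eps\in\sigma((\A\e+\Id)^{-1})\cap[d,1]$, set $\mu_\eps:=\lambda_\eps^{-1}-1\in\sigma(\A\e)\cap[0,d^{-1}-1]$, and take a unit-norm Weyl sequence $(u_k)\subset\dom(\A\e)\subset\H(\Omega\e)$ for $\A\e$ at $\mu_\eps$; the same sequence is Weyl for $(\A\e+\Id)^{-1}$ at $\lambda_\eps$. Since $V\e\ge 0$,
\[
\|\nabla u_k\|_{\L(\Omega\e)}^{2}\le\a\e[u_k,u_k]=(\A\e u_k,u_k)_{\L(\Omega\e)}\longrightarrow\mu_\eps\le d^{-1}-1,
\]
so \eqref{CS-P2} gives $\|\check J\e u_k\|_{\L(\Omega)}^{2}\ge 1-\lambda_{\tilde S}^{-1}\eps^2(d^{-1}-1)+o(1)$, which is strictly positive precisely under the hypothesis $\eps<\sqrt{\lambda_{\tilde S} d/(1-d)}$. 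Combining the second estimate of Theorem~\ref{th1} with \eqref{CS-P1} yields $\|((\A^\gamma+\Id)^{-1}-\lambda_\eps)\check J\e u_k\|_{\L(\Omega)}\le 6\delta\e^\gamma+o(1)$; after normalising $\check J\e u_k$ and letting $k\to\infty$ one recovers precisely the advertised bound with the square-root denominator.

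\textbf{Main obstacle.} The delicate step is the outside bound, where one must prevent $\check J\e u_k$ from collapsing. The interplay of the Poincar\'e-type inequality \eqref{CS-P2} with the energy bound $\|\nabla u_k\|^2\le(\A\e u_k,u_k)$, which crucially uses $V\e\ge 0$, is what makes this possible and is responsible both for the smallness condition on $\eps$ and for the exact form of the denominator. Elsewhere the argument is a routine Weyl-sequence transfer.
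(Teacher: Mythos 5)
Your proof is correct and follows essentially the same route as the paper: the paper first establishes the abstract Theorem~\ref{th-Haus} and then specialises it, but the content of that abstract proof is exactly your Weyl-sequence transfer, including the identity $(\mathcal{R}\e-z\Id)\psi=-z\mathcal{R}\e(\A\e-\lambda\Id)\psi$ hidden in your ``same sequence is Weyl for the resolvent'' step, and the lower bound $\|\check J\e u_k\|^2\ge 1-\check\nu\e(d^{-1}-1)+o(1)$ coming from \eqref{Jcond4} (which in the concrete setting is \eqref{CS-P2} combined with $V\e\ge 0$). The only cosmetic difference is that you argue directly on the concrete operators rather than factoring through the abstract statement, and for the inside distance you exploit $\|J\e f\|=\|f\|$ to avoid any restriction to $[d,1]$, which the paper recovers by letting $d\to 0$ using $\nu\e=0$.
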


Taking into account that $0\in \sigma((\A\e  +\Id)^{-1})\cap\sigma((\A^\gamma  +\Id)^{-1})$,
we obtain easily the following corollary from Theorem~\ref{th2}.

\begin{corollary}\label{coro1}
Let \eqref{convV} hold. 
Then
\begin{gather*}
\dist_\Hausdorff 
  \left(\sigma((\A\e +\Id)^{-1}),\,\sigma((\A^{ \gamma}+\Id)^{-1})\right)
  \to 0\ \text{ as }\eps\to 0.
\end{gather*}
\end{corollary}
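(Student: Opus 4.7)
My plan is to derive the Hausdorff convergence as a direct consequence of Theorem~\ref{th2}, once two preliminary observations are in place.

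The first observation is that the hypothesis \eqref{convV} controls exactly the only non-geometric term in the definition \eqref{delta} of $\delta\e^\gamma$. The remaining terms in \eqref{delta} are purely geometric and vanish as $\eps\to 0$: one has $\eps^{1/2}|\ln\eps|\to 0$ and $\eps^{1/2}\to 0$ in the finite-$\gamma$ cases, while in the case $\gamma=\infty$ one has $\gamma\e\to\infty$ by \eqref{gamma}, hence $\eps^{1/2}+\gamma\e^{-1/2}\to 0$. Consequently $\delta\e^\gamma\to 0$ under \eqref{convV}.

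The second observation is that $0\in\sigma((\A\e+\Id)^{-1})\cap\sigma((\A^\gamma+\Id)^{-1})$. Indeed, both $\A\e$ and $\A^\gamma$ are non-negative self-adjoint operators with spectra unbounded above (either through the discrete Neumann spectrum on a bounded tube/interval tending to $+\infty$, or through the continuous spectrum on the unbounded pieces), so by the spectral mapping theorem for resolvents their resolvents have $0$ as an accumulation point of the spectrum, hence $0$ itself lies in the spectrum.

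Given these, Theorem~\ref{th2} immediately yields $\dist_{\rm in}\leq 4\delta\e^\gamma\to 0$. For the outside distance I would fix an arbitrary $\eta>0$, choose $d\in(0,\min(\eta,1))$, and split
$$\sigma((\A\e+\Id)^{-1})=\bigl(\sigma((\A\e+\Id)^{-1})\cap[0,d)\bigr)\cup\bigl(\sigma((\A\e+\Id)^{-1})\cap[d,1]\bigr).$$
Every point of the first piece lies within distance $d<\eta$ of $0\in\sigma((\A^\gamma+\Id)^{-1})$. Every point of the second piece lies, by the first estimate in Theorem~\ref{th2}, within distance at most $6\delta\e^\gamma/\sqrt{1-(\lambda_{\tilde S})^{-1}\eps^2(d^{-1}-1)}$ of $\sigma((\A^\gamma+\Id)^{-1})$; since $\delta\e^\gamma\to 0$ and the denominator tends to $1$ as $\eps\to 0$, this bound becomes less than $\eta$ for all sufficiently small $\eps$. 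Hence $\dist_{\rm out}\leq\eta$ eventually, and since $\eta$ was arbitrary, $\dist_{\rm out}\to 0$. Combining both limits gives $\dist_\Hausdorff\to 0$.

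There is no real obstacle: the corollary is essentially bookkeeping on top of Theorem~\ref{th2}. The only mildly delicate point is the treatment of the part of $\sigma((\A\e+\Id)^{-1})$ near $0$, which is not covered by the estimate on $[d,1]$; this is precisely where the shared spectral value $0$ (present in both spectra because both operators are unbounded) is used to close the argument.
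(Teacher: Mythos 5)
Your proof is correct and follows exactly the route the paper indicates: it merely remarks before the corollary that $0\in\sigma((\A\e+\Id)^{-1})\cap\sigma((\A^\gamma+\Id)^{-1})$ and that the result then follows ``easily'' from Theorem~\ref{th2}, and your writeup simply supplies the routine bookkeeping (checking that \eqref{convV} forces $\delta\e^\gamma\to 0$, applying the inside-distance bound directly, and closing the outside-distance bound by splitting off the part of $\sigma((\A\e+\Id)^{-1})$ in $[0,d)$ and absorbing it into the shared spectral point $0$). Nothing is missing and there is no material deviation from the paper's intended argument.
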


In the subsequent theorem we assume that $L_->-\infty,\ L_+<\infty$. In this case the spectra of $\A \e$ and $\A^{ \gamma}$ are purely discrete.
We denote by $\{\lambda _{k,\eps}\}_{k\in\mathbb{N}}$ and
$\{\lambda_{k}^{ \gamma}\}_{k\in\mathbb{N}}$ the sequences of the eigenvalues of $\A\e $ and $\A^{ \gamma}$, respectively,
arranged in the ascending order and repeated according to their
multiplicities.
 
\begin{theorem}\label{th3}
Let \eqref{convV} hold. Then  
\begin{gather}\label{spectrum1}
\lim_{\eps\to 0}\lambda_{k,\eps} =\lambda_k^{ \gamma},\ k\in\mathbb{N},
\end{gather}
moreover
\begin{gather}
\label{spectrum2}
\left|(\lambda_{k,\eps} +1)^{-1}-(\lambda_k^{ \gamma}+1)^{-1}\right|\leq
   4C\e \delta^{ \gamma}\e,
\end{gather} 
where the constants $C\e$ satisfies  $|C\e|\leq C$, $\lim_{\eps\to 0}C\e=1$.
\end{theorem}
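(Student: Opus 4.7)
Since $|L_\pm|<\infty$, both $\Omega\e$ and $\Omega$ are bounded, so the resolvents $R\e:=(\A\e+\Id)^{-1}$ and $R:=(\A^\gamma+\Id)^{-1}$ are compact self-adjoint, with eigenvalue sequences $\mu_{k,\e}:=(\lambda_{k,\e}+1)^{-1}$ and $\mu_k:=(\lambda_k^\gamma+1)^{-1}$ in decreasing order. Since \eqref{convV} forces $\delta^\gamma\e\to 0$, \eqref{spectrum1} follows from \eqref{spectrum2} by inversion, so the plan targets the quantitative bound. I will combine the two estimates of Theorem~\ref{th1} with the min-max principle, exploiting that $J\e$ is an exact isometry by \eqref{CS} while $\check J\e=J\e^*$ is only approximately so through \eqref{CS-P1}--\eqref{CS-P2}.

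For the lower bound $\mu_{k,\e}\ge\mu_k-4\delta^\gamma\e$ I apply min-max to $R\e$ with the test subspace $J\e(W_k)$, where $W_k=\mathrm{span}\{\phi_1,\dots,\phi_k\}$ is the top eigenspace of $R$. By \eqref{CS} the family $\{J\e\phi_i\}$ is orthonormal in $\L(\Omega\e)$; for $u=\sum c_i J\e\phi_i$ with $\sum|c_i|^2=1$ the identity $R\e J\e=J\e R+(R\e J\e-J\e R)$ yields $(R\e u,u)=\sum_i\mu_i|c_i|^2+(Ec,c)_{\mathbb{C}^k}$, where $E_{ij}=(\check J\e(R\e J\e-J\e R)\phi_i,\phi_j)_{\L(\Omega)}$ is the Gram matrix of $\check J\e(R\e J\e-J\e R)$ on $W_k$. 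Since $\|\check J\e\|\le 1$ by \eqref{CS-P1}, Theorem~\ref{th1} gives $\|E\|\le 4\delta^\gamma\e$, and min-max concludes.

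The upper bound uses the dual min-max formula with the $(k-1)$-dimensional subspace $J\e(V_{k-1})$, $V_{k-1}=\mathrm{span}\{\phi_1,\dots,\phi_{k-1}\}$. Any unit $u\perp J\e(V_{k-1})$ splits $L^2$-orthogonally as $u=J\e\phi+w$ with $\phi:=\check J\e u$ and $w:=u-J\e\check J\e u\in(\mathrm{range}\,J\e)^\perp$; the adjointness $(J\e)^*=\check J\e$ transfers the orthogonality to $\phi\perp V_{k-1}$ in $\L(\Omega)$, whence $(R\phi,\phi)\le\mu_k\|\phi\|^2$. I then estimate the three terms of $(R\e u,u)$: Theorem~\ref{th1} gives $(R\e J\e\phi,J\e\phi)\le\mu_k\|\phi\|^2+4\delta^\gamma\e\|\phi\|^2$; the cross term $2\mathrm{Re}(R\e J\e\phi,w)$ loses its principal part since $(J\e R\phi,w)=(R\phi,\check J\e w)_{\L(\Omega)}=0$, leaving the Theorem~\ref{th1}-remainder $\le 8\delta^\gamma\e\|\phi\|\|w\|$; for the transverse piece I set $v:=R\e w$, write $(R\e w,w)=(v-J\e\check J\e v,w)$ (again using $\check J\e w=0$), and combine \eqref{CS-P2} with $\|\nabla v\|^2\le(R\e w,w)$ to obtain $(R\e w,w)\le\eps^2\lambda_{\tilde S}^{-1}\|w\|^2$. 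A short optimisation in $(\|\phi\|,\|w\|)$ under $\|\phi\|^2+\|w\|^2=1$---the expression $8\delta^\gamma\e\|\phi\|\|w\|-\mu_k\|w\|^2$ peaks at $\|w\|\sim 4\delta^\gamma\e/\mu_k$---yields $\mu_{k,\e}\le\mu_k+4\delta^\gamma\e\bigl(1+O(\delta^\gamma\e/\mu_k)\bigr)$, giving the factor $4C\e$ with $C\e\to 1$.

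The genuine difficulty is the upper-bound step: because $R\e$ is not small in norm on $(\mathrm{range}\,J\e)^\perp$---transverse eigenmodes of $\A\e$ have eigenvalues of order $\eps^{-2}$, but there are infinitely many of them---the component $w$ cannot be discarded a priori. What rescues the argument is the double cancellation $\check J\e w=0$: it simultaneously kills the principal part of the cross term and reduces $(R\e w,w)$ to a pairing against $v-J\e\check J\e v$, where the cross-sectional Poincar\'e inequality embedded in \eqref{CS-P2} delivers a genuine $\eps^2$ gain. That $\eps^2$ is harmlessly absorbed into $C\e$ since $\delta^\gamma\e\gtrsim\eps^{1/2}$ in all regimes of \eqref{delta}, and careful bookkeeping of the $\|w\|$-optimisation yields the sharp prefactor $4$ rather than the cruder constant one obtains from a naive Cauchy--Schwarz split.
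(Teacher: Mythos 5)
Your proof is correct, and it takes a genuinely different route from the paper. The paper's proof invokes the abstract result of Iosif'yan--Oleinik--Shamaev (Theorem~\ref{thm:IOS}), checking its hypotheses~$A_1$--$A_4$; the most substantial of these, the subsequential compactness condition~$A_4$, is handled qualitatively via Rellich's embedding theorem together with the near-inverse property $\|\Id-J\e\check J\e\|_{\H(\Omega\e)\to\L(\Omega\e)}\to 0$. By contrast, you run a bare-hands Courant--Fischer argument directly on the resolvents, using $J\e(W_k)$ as the test space for the lower bound and $J\e(V_{k-1})^\perp$ for the upper bound, and the decomposition $u=J\e\check J\e u + w$ with $w\in\ker\check J\e$ to separate the longitudinal and transverse contributions. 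Both proofs ultimately rest on the same two ingredients -- the resolvent estimate of Theorem~\ref{th1} and the quantitative near-isometry encoded in~\eqref{CS},~\eqref{CS-P1},~\eqref{CS-P2} -- but your argument makes the mechanism explicit: the cross-sectional Poincar\'e gain in~\eqref{CS-P2}, amplified by the double cancellation $\check J\e w=0$, yields $(R\e w,w)\le\eps^2\lambda_{\tilde S}^{-1}\|w\|^2$, which is precisely what lets the downward parabola $(\eps^2\lambda_{\tilde S}^{-1}-\mu_k)\|w\|^2+8\delta^\gamma\e\|w\|$ absorb the cross term, producing $C\e=1+4\delta^\gamma\e/(\mu_k-\eps^2\lambda_{\tilde S}^{-1})\to 1$. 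What the paper's approach buys is reuse of a citable black box and avoids the min--max bookkeeping; what yours buys is a self-contained proof that exposes where each hypothesis is consumed, and in particular shows that the second statement~\eqref{spectrum2} can be obtained without any compactness-style extraction argument. One point to make explicit, which is shared with the paper: your $C\e$ depends on $k$ through $\mu_k=(\lambda^\gamma_k+1)^{-1}$ (this is positive since $R$ is a positive compact operator), and the assertion $\lim_{\eps\to 0}C\e=1$ holds for each fixed~$k$, exactly as in Theorem~\ref{thm:IOS} where the supremum is taken over the $k$-th eigenspace.
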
 
 
In the next section we present  abstract results, which are then being used for the proofs of Theorems~\ref{th1}-\ref{th3}. Some of them are known  \citep{IOS89,P06}, while Theorem~\ref{th-Haus} is apparently new and is interesting by itself. 

\section{Abstract framework}\label{sec3}

\subsection{Operator estimates for resolvent difference} 

Let $\HS$ and $ \HS\e$ be two separable Hilbert spaces. 
Let $\a$ and $\a\e$ be closed  densely defined  non-negative
sesquilinear forms in $\HS$ and $\HS\e$, being associated with  the non-negative  self-adjoint operators
$\A$ and $\A\e$, respectively. Note, that within this subsection $\HS\e$ is just a \textit{notation} for some Hilbert space, which (in general) differs from the space $\HS$, i.e.\ the sub-index $\eps$ does not mean that this space depends on a small parameter. 
  
We also introduce Hilbert spaces $\HS^1$, $\HS^2$ and $\HS^1\e$, $\HS^2\e$  via
\begin{gather}\label{scale}
\begin{array}{lll}
\HS^1=\dom(\a)&\text{equipped with the norm }&\|f\|_{\HS^1}=(\a[f,f]+\|f\|_{\HS}^2)^{1/2},\\[1mm]
\HS^1\e=\dom(\a\e)&\text{equipped with the norm }&\|f\|_{\HS^1\e}=(\a\e[f,f]+\|f\|_{\HS\e}^2)^{1/2},\\[1mm]
\HS^2=\dom(\A)&\text{equipped with the norm }&\|f\|_{\HS^2}=\|\A f+f\|_{\HS},\\[1mm]
\HS^2\e=\dom(\A\e)&\text{equipped with the norm }&\|f\|_{\HS^2\e}=\|\A\e f+f\|_{\HS\e}.
\end{array}
\end{gather}
Evidently, $\HS^2\subset\HS^1\subset\HS$, moreover
\begin{gather}\label{scale+}
\forall f\in\HS^2:\ \|f\|_{\HS}\leq \|f\|_{\HS^1}\leq \|f\|_{\HS^2};
\end{gather}
similar statement holds true for $\HS\e$, $\HS^1\e$, $\HS^2\e$.

Finally, let $$J\e\colon
  \HS\to {\HS\e},\ {\check J\e}\colon {\HS\e}\to \HS,\ {J\e^1} \colon {\HS^1} \to {\HS\e^1},\ {\check J\e^{1}} \colon {\HS\e^1}\to {\HS^1}$$
  be linear bounded
  operators. In the applications  the operators $J\e$ and $\check J\e$ appear in a natural way (as, for example, $J\e$ \eqref{J} and $\check J\e$ \eqref{J'} in our case), while the other two operators
should be constructed as 
``almost'' restrictions of $J\e$ and $\check J\e$ to $\HS^1$ and $\HS^1\e$, respectively (see conditions \eqref{cond1} below).

\begin{theorem}[\citep{P06}]
\label{th0} 
Let for some $\delta\e\geq 0$ and $k\leq 2$ the following conditions hold:
\begin{subequations}\label{cond123}
\begin{align}
\begin{array}{ll}
\|J\e f-J\e^1 f\|_{\HS\e}\leq \delta\e\|f\|_{\HS^1},&  \forall f\in \HS^1,\\[1mm]    
\|\check  J\e u-\check J\e^{1} u\|_{\HS}\leq 
\delta\e\|u\|_{\HS\e^1},& \forall u\in \HS\e^1,
\end{array} \label{cond1}
\\[1mm]
\left|(J\e f,u)_{\HS\e}-(f, \check J\e u)_{\HS}\right|\leq 
\delta\e \|f\|_{\HS}\|u\|_{\HS\e},&\quad \forall f\in \HS,\ u\in \HS\e,\label{cond2}\\[1mm]
\left|\a\e(J^1\e f,u)-\a(f, \check J\e^{1} u)\right|\leq 
\delta\e \|f\|_{\HS^k}\|u\|_{\HS\e^1},&\quad \forall f\in \HS^k,\ u\in \HS\e^1.\label{cond3}
\end{align}
\end{subequations} 
Then 
\begin{gather}\label{nrc4}
\left\|(\A\e+\Id)^{-1}J\e -J\e(\A+\Id)^{-1} \right\|_{\HS\to \HS\e}\leq 4\delta\e,\\ \label{nrc6}
\left\|\check J\e(\A\e+\Id)^{-1} - (\A+\Id)^{-1} \check J\e \right\|_{\HS\to \HS\e}\leq 6\delta\e.
\end{gather}

\end{theorem}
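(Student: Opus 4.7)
My plan follows the standard quasi-unitary-equivalence technique: test the operator difference against arbitrary vectors and transfer to sesquilinear forms via the first representation theorem, then control each residue directly by one of \eqref{cond1}, \eqref{cond2}, \eqref{cond3}. Fix $f\in \HS$ and $u\in \HS\e$, set $R:=(\A+\Id)^{-1}$, $R\e:=(\A\e+\Id)^{-1}$, and introduce the ``promoted'' vectors $g:=Rf\in \HS^2$ and $w:=R\e u\in \HS\e^2$. By the definitions in \eqref{scale} one has $\|g\|_{\HS^2}=\|f\|_{\HS}$ and $\|w\|_{\HS\e^2}=\|u\|_{\HS\e}$, together with $(\A+\Id)g=f$ and $(\A\e+\Id)w=u$; these identities, combined with the graph-norm monotonicity \eqref{scale+}, are what ultimately let every residue be majorised by $\delta\e\|f\|_{\HS}\|u\|_{\HS\e}$.

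Using self-adjointness of $R\e$,
\[
\bigl((R\e J\e-J\e R)f,u\bigr)_{\HS\e}=(J\e f,w)_{\HS\e}-(J\e g,u)_{\HS\e}=(J\e(\A+\Id)g,w)_{\HS\e}-(J\e g,(\A\e+\Id)w)_{\HS\e}.
\]
I would then swap $J\e$ for $J\e^1$ on $g$ and $\check J\e$ for $\check J\e^1$ on $w$ so that the first representation theorem becomes applicable: since $g\in\HS^2$ and $\check J\e^1 w\in \HS^1$ one gets $((\A+\Id)g,\check J\e^1 w)_{\HS}=\a[g,\check J\e^1 w]+(g,\check J\e^1 w)_{\HS}$, and symmetrically $(J\e^1 g,(\A\e+\Id)w)_{\HS\e}=\a\e[J\e^1 g,w]+(J\e^1 g,w)_{\HS\e}$. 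Subtracting isolates the form difference $\a[g,\check J\e^1 w]-\a\e[J\e^1 g,w]$, to which \eqref{cond3} applies directly, yielding the bound $\delta\e\|g\|_{\HS^k}\|w\|_{\HS\e^1}\leq\delta\e\|f\|_{\HS}\|u\|_{\HS\e}$ (this is exactly where the hypothesis $k\leq 2$ enters, via $\|g\|_{\HS^k}\leq\|g\|_{\HS^2}$). Every other residue produced by the swaps is either of the type $(J\e-J\e^1)g$ or $(\check J\e-\check J\e^1)w$, controlled by \eqref{cond1}, or of the type $(J\e f,w)-(f,\check J\e w)$, controlled by \eqref{cond2}, and in each case the auxiliary norms dominate through $\|g\|_{\HS^1}\leq\|f\|_{\HS}$ and $\|w\|_{\HS\e^1}\leq\|u\|_{\HS\e}$. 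Summing these contributions and taking the supremum over unit $u\in\HS\e$ gives \eqref{nrc4}.

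The second estimate \eqref{nrc6} follows by the mirror argument: test against $f\in\HS$, invoke self-adjointness of $R$ to pivot the resolvent, and repeat the form-transfer with the same $g,w$. The larger constant $6$ comes from the fact that $\check J\e$ is now the outer operator rather than $J\e$, so two additional swaps $J\e\leftrightarrow J\e^1$ and $\check J\e\leftrightarrow \check J\e^1$ are required before a legal form identity can be extracted. The main obstacle is organisational, not analytical: a careless telescoping of the eight possible differences among $J\e,J\e^1,\check J\e,\check J\e^1$ gives constants on the order of seven or eight, and reaching the advertised $4$ and $6$ requires grouping the swaps so that the scalar residue $(g,\check J\e^1 w)_{\HS}-(J\e^1 g,w)_{\HS\e}$ is handled by a single application of \eqref{cond2} flanked by two of \eqref{cond1}, rather than being dismantled into three independent pieces.
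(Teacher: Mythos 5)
The paper does not prove Theorem~\ref{th0} at all; it is cited verbatim from~\citep{P06}, with a remark that estimate~\eqref{nrc4} is \citep[Th.~A.5]{P06} and~\eqref{nrc6} is a special case of~\citep[Th.~A.10]{P06}. So you are reconstructing an outsourced proof rather than matching one the paper supplies. Your overall strategy --- testing $((\A\e+\Id)^{-1}J\e-J\e(\A+\Id)^{-1})f$ against $u$, promoting $f$ and $u$ to $g=Rf\in\HS^2$ and $w=R\e u\in\HS\e^2$, and using the first representation theorem to transfer the resolvent difference to the sesquilinear forms --- is exactly the right one and is the standard quasi-unitary-equivalence argument Post uses.

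However, there is a genuine gap in your handling of the constants, and your closing remark about ``grouping the swaps'' does not resolve it; it merely restates it. Carrying out your own decomposition yields the errors
\begin{align*}
&\bigl|(J\e f,w)_{\HS\e}-(f,\check J\e w)_{\HS}\bigr| &&\text{(one use of \eqref{cond2}),}\\
&\bigl|(f,(\check J\e-\check J\e^1)w)_{\HS}\bigr| &&\text{(one use of \eqref{cond1}),}\\
&\bigl|\a[g,\check J\e^1 w]-\a\e[J\e^1 g,w]\bigr| &&\text{(one use of \eqref{cond3}),}\\
&\bigl|((J\e-J\e^1)g,u)_{\HS\e}\bigr| &&\text{(one use of \eqref{cond1}),}
\end{align*}
plus the scalar residue $(g,\check J\e^1 w)_{\HS}-(J\e^1 g,w)_{\HS\e}$. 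Estimating that residue by one use of \eqref{cond2} flanked by two of \eqref{cond1}, as you propose, costs three more applications, so the total is $7\delta\e$, not $4\delta\e$. There is no cancellation among the seven pieces (the residues on the $f$-side involve $f=(\A+\Id)g$ while those on the $g$-side involve $g$ alone, and likewise for $u$ vs.\ $w$). The constant $4$ emerges cleanly only if condition~\eqref{cond3} is read with the \emph{shifted} form $\a_+[\cdot,\cdot]=\a[\cdot,\cdot]+(\cdot,\cdot)$, i.e.\ $\left|\a\e(J^1\e g,w)+(J^1\e g,w)_{\HS\e}-\a(g,\check J\e^{1} w)-(g,\check J\e^{1} w)_{\HS}\right|\leq \delta\e\|g\|_{\HS^k}\|w\|_{\HS\e^1}$, because then $\a_+[g,\check J\e^1 w]=(f,\check J\e^1 w)_{\HS}$ and $\a\e_+[J\e^1 g,w]=(J\e^1 g,u)_{\HS\e}$ identically, the scalar residue never appears, and the count becomes $1+1+1+1=4$. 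As stated in the present paper (with the unshifted forms), the argument you give only furnishes $7\delta\e$; to obtain $4$ you would either need to appeal to Post's precise formulation of the quasi-unitarity hypotheses or supply a different algebraic reorganisation, and you have shown neither.
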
 
  
\begin{remark} 
Estimate \eqref{nrc4} is the first statement of \citep[Th.~A.5]{P06},
estimate \eqref{nrc6} is the particular case of the first statement of \citep[Th.~A.10]{P06}.
We note that on the first glance \citep[Th.~A.10]{P06} requires more assumptions on $J\e$ and $\check J\e$, namely 
$$
\|f-\check J\e J\e f\|_{\HS  \to\HS  }\leq\delta\e\|f\|_{\HS^1  },\quad
\|u-J\e\check  J\e u\|_{\HS\e\to\HS\e}\leq\delta\e\|u\|_{\HS\e^1}.
$$
But it is visible from its proof that these additional assumptions are needed only for the second and the third statements of \citep[Th.~A.10]{P06}, while the first one needs only conditions \eqref{cond123}.
\end{remark}

\begin{remark}  
The fact that the convergence of sesquilinear forms with \emph{common domain} implies norm
resolvent convergence of the associated operators is well known, see e.g. \citep[Th.~VIII.25]{RS72}.
Theorem~\ref{th0} is a version of this statement for the case of varying Hilbert spaces.
\end{remark}

\subsection{Hausdorff distance between spectra}  

As in the previous subsection, $\HS$ and $ \HS\e$ are two separable Hilbert spaces, 
$\a$ and $\a\e$ are closed  densely defined  non-negative
sesquilinear forms in $\HS$ and $\HS\e$, and 
$\A$ and $\A\e$ are the operators associated with these forms.

\begin{theorem}\label{th-Haus}
Let 
\begin{gather}\label{th-Haus-1}
\left\|(\A\e+\Id)^{-1}J\e -J\e(\A+\Id)^{-1} \right\|_{\HS\to \HS\e}\leq \delta\e,\quad
\left\|\check J\e(\A\e+\Id)^{-1} - (\A+\Id)^{-1} \check J\e \right\|_{\HS\e\to \HS}\leq \check\delta\e.
\end{gather}
Here  $\delta\e$ and $\check\delta\e$ are non-negative constants, $J\e\colon
\HS\to {\HS\e},\ {\check J\e}\colon {\HS\e}\to \HS$ are linear bounded operators satisfying 
\begin{gather}\label{Jcond3}
\|f\|^2_{\HS}\leq \|J\e f\|^2_{\HS\e}+\nu\e \a[f,f],\quad\forall f\in \dom(\a), \\\label{Jcond4}
\|u\|^2_{\HS\e}\leq \|\check J\e u\|^2_{\HS}+\check\nu\e \a\e[u,u],\quad\forall u\in \dom(\a\e),
\end{gather}
where  $\nu\e,\check{\nu}\e $ are non-negative constants.
 
Then for any $d\in (0,1)$ one gets
\begin{gather*}
\dist_{\rm out}\left(\sigma((\A\e  +\Id)^{-1})\cap[d,1],\,\sigma((\A   +\Id)^{-1})\right)\leq 
{ \check\delta\e \over \sqrt{1-\check\nu\e(d^{-1}-1)}},\\ 
\dist_{\rm out}\left(\sigma((\A  +\Id)^{-1})\cap[d,1],\,\sigma((\A\e    +\Id)^{-1})\right)\leq 
{ \delta\e \over \sqrt{1-\nu\e(d^{-1}-1)}} 
\end{gather*}
provided $\nu\e,\check\nu\e\in [0,{d\over 1-d})$.

\end{theorem}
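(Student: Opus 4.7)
\medskip

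\noindent\textbf{Proof plan for Theorem~\ref{th-Haus}.}
The approach rests on the characterisation
$\dist(\mu,\sigma(T))=\inf_{v\neq 0}\|(T-\mu)v\|/\|v\|$
valid for any bounded self-adjoint operator $T$, combined with spectral projections and the two hypotheses \eqref{th-Haus-1}, \eqref{Jcond3}, \eqref{Jcond4}. The two inequalities in the conclusion are symmetric, so I will only sketch the first one (involving $\check\delta\e$ and $\check\nu\e$); the second one follows by interchanging the roles of the two spaces.

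Fix $\mu\in \sigma((\A\e+\Id)^{-1})\cap[d,1]$. Let $E\e(\cdot)$ denote the spectral resolution of the bounded self-adjoint operator $(\A\e+\Id)^{-1}$, whose spectrum lies in $(0,1]$. For $j$ large enough so that $\mu-1/j>0$, the projection $P_j:=E\e([\mu-1/j,\mu+1/j])$ is non-zero because $\mu\in\sigma((\A\e+\Id)^{-1})$; pick any $u_j\in\mathrm{range}(P_j)$ with $\|u_j\|_{\HS\e}=1$. Since the support of the spectral measure of $u_j$ is bounded away from $0$, we have $u_j\in\dom(\A\e)\subset \dom(\a\e)$, and functional calculus gives the two pointwise estimates
\begin{gather*}
\|((\A\e+\Id)^{-1}-\mu)u_j\|_{\HS\e}\leq 1/j,\qquad
\a\e[u_j,u_j]=((\A\e+\Id)u_j,u_j)_{\HS\e}-1\leq \frac{1}{\mu-1/j}-1.
\end{gather*}

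Plugging the second bound into \eqref{Jcond4} yields the crucial lower bound
$$\|\check J\e u_j\|_{\HS}^2\geq 1-\check\nu\e\!\left(\frac{1}{\mu-1/j}-1\right),$$
which, in the limit $j\to\infty$, stays bounded below by $1-\check\nu\e(\mu^{-1}-1)\geq 1-\check\nu\e(d^{-1}-1)>0$ thanks to the standing assumption $\check\nu\e<d/(1-d)$. Simultaneously, using the triangle inequality and the second estimate in \eqref{th-Haus-1},
\begin{gather*}
\|((\A+\Id)^{-1}-\mu)\check J\e u_j\|_{\HS}
\leq \|\check J\e(\A\e+\Id)^{-1}-(\A+\Id)^{-1}\check J\e\|\cdot \|u_j\|_{\HS\e}
+ \|\check J\e\|\cdot \|((\A\e+\Id)^{-1}-\mu)u_j\|_{\HS\e},
\end{gather*}
which is dominated by $\check\delta\e+\|\check J\e\|/j$.

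Applying the characterisation of the distance to the spectrum to $T=(\A+\Id)^{-1}$ and the non-zero vector $\check J\e u_j$ gives
$$\dist(\mu,\sigma((\A+\Id)^{-1}))\leq \frac{\check\delta\e+\|\check J\e\|/j}{\sqrt{1-\check\nu\e\big((\mu-1/j)^{-1}-1\big)}}.$$
Letting $j\to\infty$ and using $\mu\geq d$ gives the desired pointwise bound $\check\delta\e/\sqrt{1-\check\nu\e(d^{-1}-1)}$, and taking the supremum over $\mu\in \sigma((\A\e+\Id)^{-1})\cap[d,1]$ yields the first inequality. The main (though mild) obstacle is the bookkeeping around the spectral-projection construction: one must verify that the approximate eigenvectors lie in $\dom(\a\e)$ and that the denominator in the distance estimate stays uniformly positive. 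Both points are taken care of by the fact that $\mu\geq d>0$ keeps the supporting interval away from the origin, and by the condition $\check\nu\e<d/(1-d)$, which is exactly equivalent to $1-\check\nu\e(d^{-1}-1)>0$.
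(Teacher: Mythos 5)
Your proof is correct and follows essentially the same route as the paper: construct approximate eigenvectors for $(\A\e+\Id)^{-1}$ near $\mu$, transfer them to $\HS$ via $\check J\e$, use \eqref{th-Haus-1} to bound the residual, use \eqref{Jcond4} to keep the denominator away from zero, and pass to the limit. The only cosmetic difference is that you build the approximate eigenvectors directly from spectral projections of the resolvent $(\A\e+\Id)^{-1}$, while the paper takes a Weyl sequence for $\A\e$ at $\lambda=(1-z)/z$ and converts it via the identity $(\mathcal{R}\e-z\Id)\psi=-z\mathcal{R}\e(\A\e-\lambda\Id)\psi$; these are interchangeable bookkeeping choices.
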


\begin{remark}
For our operators $J\e$ \eqref{J} and $\check J\e$ \eqref{J'} conditions \eqref{Jcond3}-\eqref{Jcond4} hold with $ \nu\e=0$, $\hat\nu\e=\eps^2\lambda^{-1}_{\tilde S}$ (cf.~\eqref{CS}, \eqref{CS-P2}).
\end{remark}

\begin{proof}We denote $\mathcal{R}\e:=(\A\e+\Id)^{-1}$, $\mathcal{R}:=(\A+\Id)^{-1}$.

Let $z\not\in \sigma(\mathcal{R})$. 
From
$\|(\mathcal{R}-z\Id)^{-1}\|_{\HS}={1\over \dist(z,\,\sigma(\mathcal{R}))}$
 we conclude that
\begin{gather}\label{Haus1}
\forall\phi\in\HS\text{ with }\|\phi\|_{\HS}=1:\quad \dist(z,\sigma(\mathcal{R}))\leq \|(\mathcal{R}-z\Id)\phi\|_{\HS}.
\end{gather} 
For $z\in \sigma(\mathcal{R})$ the above estimate is obvious, thus it holds for all $z\in\mathbb{C}$.

Now, let $z\in\sigma(\mathcal{R}\e)\cap[d,1]$. We set $\lambda:={1-z\over z}$. By spectral mapping theorem $\lambda\in \sigma(\A\e)\cap[0,{1-d\over d}]$ and hence for each $\eta>0$ there exists $\psi_\eta\in\dom(\A\e)$ such that
\begin{gather}\label{weyl}
\|\psi_\eta\|_{\HS\e}  =1,\quad \|(\A\e-\lambda\Id)\psi_\eta\|_{\HS\e}\leq \eta. 
\end{gather} 
Using the identity $$(\mathcal{R}\e-z\Id)\psi =-z \mathcal{R}\e(\A\e-\lambda\Id )\psi,\ \psi\in\dom(\A\e)$$ and taking into account that $z\in (0,1)$, $\|\mathcal{R}\e\|_{\HS\e\to\HS\e}\leq 1$ we obtain from \eqref{weyl}:
\begin{gather}\label{Haus2}
\|(\mathcal{R}\e-z\Id)\psi_\eta\|_{\HS\e}\leq   \eta.
\end{gather}
Also we notice, that, due to \eqref{Jcond4}, \eqref{weyl},  $\check J\e\psi_\eta\not=0$ for small enough $\eta$. Indeed, taking into account that $\lambda\leq {1-d\over d}$, we obtain
\begin{multline}\label{JJ}
\|\check J\e\psi_\eta\|_{\HS}^2\overset{\eqref{Jcond4}}{\geq} \|   \psi_\eta\|_{\HS\e}^2-\check\nu\e\a\e[\psi_\eta,\psi_\eta]\\=
1-\lambda \check\nu\e -\check\nu\e(\A\e\psi_\eta-\lambda\psi_\eta,\psi_\eta)_{\HS\e}\overset{\eqref{weyl}}{\geq}
1-\left({1-d\over d}+\eta\right) \check\nu\e.
\end{multline}
Since $\check\nu\e\in [0,{d\over 1-d})$, the right-hand-side of \eqref{JJ} is positive for small $\eta$ .

Finally, 
setting $\phi:= \|\check J\e\psi_\eta\|^{-1}_{\HS} \check J\e\psi_\eta$ in \eqref{Haus1} (here we assume that $\eta$ is small enough so that $\check J\e\psi_\eta\not=0$) and taking into account \eqref{th-Haus-1}, \eqref{Haus2}, \eqref{JJ} we obtain for $z\in\sigma(\mathcal{R}\e)\cap[d,1]$: 
\begin{gather*}
\dist(z,\,\sigma(\mathcal{R}))\leq {\|(\mathcal{R}-z\Id)\check J\e\psi_\eta\|_{\HS} \over \|\check J\e\psi_\eta\|_{\HS}}\leq { \|(\mathcal{R}\check J\e-\check J\e \mathcal{R}\e)\psi_\eta\|_{\HS}+
\|\check J\e(\mathcal{R}\e - z\Id)\psi_\eta\|_{\HS}\over \|\check J\e\psi_\eta\|_{\HS}} \leq
{\check\delta\e +\|\check J\e\| \cdot\eta 
\over \sqrt{1-\check\nu\e((d^{-1}-1)+\eta)}}.
\end{gather*}
Passing to the limit $\eta\to 0$   we arrive at the estimate
\begin{gather}\label{Haus3}
\dist(z,\sigma(\mathcal{R}))\leq  
{\check\delta\e 
\over \sqrt{1-\check\nu\e(d^{-1}-1)} },\quad
\forall z\in \sigma(\mathcal{R}\e)\cap[d,1].
\end{gather}

Similarly,  we get 
\begin{gather}\label{Haus4}
\dist(z,\sigma(\mathcal{R}\e))\leq  
{\delta\e 
\over \sqrt{1- \nu\e(d^{-1}-1)} },\quad
\forall z\in \sigma(\mathcal{R})\cap[d,1].
\end{gather} 
The statement of the theorem follows directly from \eqref{Haus3}-\eqref{Haus4} and the definition of the maximal outside and maximal inside distances.
\end{proof}

\subsection{Estimate for the  difference between eigenvalues} 

For the proof of Theorem~\ref{th3} we use the abstract result
from \citep{IOS89} providing the estimate for the difference between eigenvalues of compact self-adjoint operators in varying Hilbert spaces.   

Let $\HS\e$ and
$\HS$ be separable Hilbert spaces, and
$B\e \colon \HS\e\to \HS\e,\
B \colon \HS \to \HS$ be linear
compact self-adjoint non-negative operators.
We denote by
 $\{\nu_{k,\eps}\}_{k\in\N}$ and
$\left\{\nu_k\right\}_{k\in\N}$ the eigenvalues of the
operators $B\e$ and $B$, respectively,
being
renumbered in the descending order and with account of their
multiplicity. 

\begin{theorem}[\citep{IOS89}]
  \label{thm:IOS}
  Assume that the following conditions $A_1-A_4$ hold:

{$A_1.$} The linear bounded operator $J\e \colon \HS\to
\HS\e$ exists such that for each $f\in \HS$
\begin{gather*}
\|J\e
f\|_{\HS\e} \to
\|f\|_{\HS}\text{ as }\eps\to 0.
\end{gather*}

{$A_2.$} The operator norms
$\|B\e\| $ are bounded
uniformly in $\eps$.

{$A_3.$} For any $f\in\HS$: $\|B\e J\e
f-J\e B f\|_{\HS\e}\to 0 \text{ as }\eps\to 0$.

{$A_4.$} For any family $\{f\e\in \HS\e\}_\eps$ with $\sup_{\eps}
\|f\e\|_{\HS\e}<\infty$ there exist a sequence $(\eps_m)_m$ and $w\in
\HS$ such that $ \|B_{\eps_m} f_{\eps_m}-J_{\eps_m}
w\|_{\HS_{\eps_m}}\to 0$ and $ \eps_m\to 0$ as $m \to \infty$.

Then for any $k\in\mathbb{N}$ we have
\begin{equation*}
  |\nu_{k,\eps}-\nu_k|\leq 
  C\e \sup\limits_f\|B\e J\e f-J\e B f\|_{\HS\e},
\end{equation*}
where $|C\e|\leq C$, $\lim_{\eps\to 0}C\e=1$, the supremum is taken over all
$f\in\HS$ belonging to the eigenspace associated with $\nu_k$ and
satisfying $\|f\|_\HS=1$.
\end{theorem}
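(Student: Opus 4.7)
The plan is to prove the eigenvalue bound via the Courant--Fischer min-max principle applied separately to $B$ and $B\e$, transferring trial subspaces through $J\e$, and invoking the collective compactness $A_4$ only where needed to prevent loss of spectrum. As a preliminary reduction, polarization of $A_1$ implies that for any finite family $f_1,\dots,f_N\in\HS$ the Gram matrix $\bigl[(J\e f_i,J\e f_j)_{\HS\e}\bigr]$ converges to $\bigl[(f_i,f_j)_\HS\bigr]$; consequently an orthonormal family in $\HS$ transfers to an almost-orthonormal family in $\HS\e$ whose linear span is genuinely $N$-dimensional for $\eps$ small.

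For the lower bound $\nu_{k,\eps}\geq\nu_k-\text{error}$ I would fix orthonormal eigenvectors $f_1,\dots,f_k$ of $B$ corresponding to the top $k$ eigenvalues $\nu_1\geq\cdots\geq\nu_k$, chosen so that $f_k$ lies in the $\nu_k$-eigenspace $E_k$. The max-min characterization gives
\begin{gather*}
\nu_{k,\eps}\ \geq\ \inf\bigl\{(B\e\phi,\phi)_{\HS\e}:\ \phi\in\mathrm{span}(J\e f_1,\dots,J\e f_k),\ \|\phi\|_{\HS\e}=1\bigr\}.
\end{gather*}
Writing $B\e J\e f_i=\nu_i J\e f_i+r_{i,\eps}$ with $r_{i,\eps}:=(B\e J\e-J\e B)f_i$ and expanding a generic unit vector $\phi$ via the Gram perturbation from the preliminary step, the infimum is bounded below by $\nu_k(1+o(1))-C'\e\max_i\|r_{i,\eps}\|_{\HS\e}$ with $C'\e\to 1$. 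Because $\nu_i-\nu_k\geq 0$ for $i<k$, the contributions of $r_{i,\eps}$ with $i<k$ can be reabsorbed into the $\nu_k$-term, so the essential surviving error involves only $f_k\in E_k$.

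For the upper bound $\nu_{k,\eps}\leq\nu_k+\text{error}$ the compactness $A_4$ is essential. Take orthonormal eigenvectors $g_{1,\eps},\dots,g_{k,\eps}$ of $B\e$ corresponding to $\nu_{1,\eps}\geq\cdots\geq\nu_{k,\eps}$, which are uniformly bounded by $A_2$. Applying $A_4$ to $g_{i,\eps}=\nu_{i,\eps}^{-1}B\e g_{i,\eps}$, I extract a subsequence along which $g_{i,\eps_m}-J_{\eps_m}w_i\to 0$ in $\HS_{\eps_m}$ for some $w_i\in\HS$; combining $A_1$ with $A_3$ identifies the $w_i$ as orthonormal eigenvectors of $B$ with eigenvalues $\liminf_m\nu_{i,\eps_m}$, so $\liminf\nu_{k,\eps}\leq\nu_k$ by the min-max applied to $B$. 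To upgrade this qualitative statement to the quantitative estimate, I would then run the min-max argument in reverse with an optimally chosen test function $f\in E_k$, estimating $(B\e J\e f,J\e f)_{\HS\e}-\nu_k\|J\e f\|_{\HS\e}^2$ directly via the identity $B\e J\e f-J\e Bf=B\e J\e f-\nu_k J\e f$, so that the supremum on the right-hand side is automatically restricted to $E_k$.

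The main obstacle is the upper bound: condition $A_4$ is purely qualitative and by itself gives only $\nu_{k,\eps}\to\nu_k$. Extracting the sharp constant $C\e\to 1$ together with a supremum taken over only the $\nu_k$-eigenspace $E_k$ requires combining that compactness argument with a careful almost-orthogonal projection of the $B\e$-spectral subspace near $\nu_k$ onto $J\e E_k$, exploiting the identity $B\e J\e f-J\e Bf=B\e J\e f-\nu_k J\e f$ valid for $f\in E_k$. The remaining ingredients --- Gram-matrix perturbation and transfer of eigenvectors through $J\e$ --- are direct bookkeeping from $A_1$ and $A_3$.
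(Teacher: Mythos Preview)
The paper does not prove this theorem: it is quoted verbatim from \citep{IOS89} (Iosif'yan--Oleinik--Shamaev) as an abstract input, so there is no ``paper's own proof'' to compare against. What you have written is therefore an independent attempt at the IOS result.

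Your outline follows the standard two-sided min-max strategy and correctly isolates where $A_4$ enters, but one step in the lower bound is not justified. When you expand a unit vector $\phi=\sum_i c_i J\e f_i$ and compute $(B\e\phi,\phi)_{\HS\e}$, the cross terms $\sum_{i,j}c_i\bar c_j\,(r_{i,\eps},J\e f_j)_{\HS\e}$ involve \emph{all} the residuals $r_{i,\eps}=(B\e J\e-J\e B)f_i$, $i=1,\dots,k$. The inequality $\nu_i-\nu_k\ge 0$ controls only the diagonal main term $\sum_i\nu_i|c_i|^2\ge\nu_k\sum_i|c_i|^2$; it does nothing to absorb the off-diagonal residuals with $i<k$. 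So your claim that ``the essential surviving error involves only $f_k\in E_k$'' is unsupported at this stage, and as written the lower bound yields an error governed by $\max_{1\le i\le k}\|r_{i,\eps}\|$ rather than $\sup_{f\in E_k,\|f\|=1}\|(B\e J\e-J\e B)f\|$.

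The usual route to the sharp statement is the one you gesture at in your final paragraph: first use $A_1$--$A_4$ to get the qualitative convergence $\nu_{j,\eps}\to\nu_j$ for every $j$, so that for small $\eps$ the cluster $\{\nu_{j,\eps}:\nu_j=\nu_k\}$ is isolated from the rest of $\sigma(B\e)$; then, for $f\in E_k$, the identity $B\e J\e f-\nu_k J\e f=(B\e J\e-J\e B)f$ together with the spectral gap shows that $J\e E_k$ is, up to an error controlled by $\sup_{f\in E_k,\|f\|=1}\|(B\e J\e-J\e B)f\|$, contained in the corresponding spectral subspace of $B\e$, and a dimension count plus Gram-matrix perturbation finishes both inequalities simultaneously with $C\e\to 1$. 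Your proposal would become a proof once this cluster-isolation step is made explicit and used to replace the unjustified ``reabsorption'' claim.
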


In the proof of Theorem~\ref{th3} we will apply the above result for $B\e=(\A\e+\Id)^{-1}$ and $B=(\A+\Id)^{-1}$.

\section{Proofs of the main results}\label{sec4}

We denote
$$\mathcal{H}\e:=\L(\Omega\e),\quad \mathcal{H}:=\L(\Omega).$$
Also we introduce Hilbert spaces 
$$
\HS^1\e:=\dom(\a\e)=\H(\Omega\e),\quad \HS^1:=\dom(\a^\gamma)$$
with the norms being given by \eqref{scale}, namely
\begin{gather*}
\|u\|_{\HS\e^1}^2=\|\nabla u\|^2_{\L(\Omega\e)}+\|V\e^{1/2}u\|^2_{\L(\Omega\e)}+\|u\|^2_{\L(\Omega\e)},\\[1mm] \|f\|^2_{\HS^1}=
\begin{cases}
 \| f'\|^2_{\L(\Omega_-)}+\|f'\|^2_{\L(\Omega_+)}+{\gamma\over 4} \left|f(+0)-f(-0)\right|^2+\|V^{1/2}f\|_{\L(\Omega)}^2+\|f\|_{\L(\Omega)}^2 ,&\gamma<\infty,\\[1mm]
\|f'\|^2_{\L(\Omega)}+\|f\|^2_{\L(\Omega)},&\gamma=\infty.
\end{cases}
\end{gather*}
Due to a standard trace inequality $|f(\pm 0)|\leq C\|f\|_{\H(\Omega^\pm)}$, the norm $\|\cdot\|_{\HS^1}$ is equivalent to the Sobolev $\H$-norm:
\begin{gather}\label{equiv-norms}
\|f\|_{\H(\Omega\setminus\{0\})}\leq \|f\|_{\HS^1}\leq C^\gamma \|f\|_{\H(\Omega\setminus\{0\})}.
\end{gather}

Our goal is to show that conditions \eqref{cond123} hold
with $\delta\e=\delta\e^\gamma$,
$J\e\colon
  \HS\to {\HS\e}$ \eqref{J}, ${\check J\e}\colon {\HS\e}\to \HS$ \eqref{J'} and suitable  ${J\e^1} \colon {\HS^1} \to {\HS\e^1}$, ${\check J\e^{1}} \colon {\HS\e^1}\to {\HS^1}$. 
In  Subsection~\ref{subsec41} (resp., Subsection~\ref{subsec42}) we construct these operators for the case $\gamma<\infty$ (resp., $\gamma=\infty$) and prove that they enjoy the required properties.  
Then  Theorems~\ref{th1}  will follow immediately from Theorem~\ref{th0},  Theorem~\ref{th2} will follow from~\eqref{CS}, \eqref{CS-P2} and Theorems~\ref{th1},\ref{th-Haus}. The proof of Theorem~\ref{th3} needs
an additional step and we postpone it to Subsection~\ref{subsec43}.

\subsection{The case $\gamma<\infty$\label{subsec41}} 
 
In what follows we use the notations
$$
\begin{array}{lll}
Y\e:=\left\{x=(\tilde  x,z)\in \Omega\e:\, z\in (-\eps,\eps)\right\},&\ Y\e^-:=\Omega^-\e\cap Y\e,&\ Y\e^+:=\Omega^+\e\cap Y\e.
\end{array}
$$

To construct an appropriate operator $J\e^1:\mathcal{H}^1\to \mathcal{H}\e^1$ we need some preparations.
Let $\varphi:\mathbb{R}\to \mathbb{R}$ be a twice-continuously
differentiable function such that
\begin{gather*}0\leq \varphi(t) \leq 1,\\
\varphi(t)=1\text{ as }t\leq 1/2\text{ and
}\varphi(t)=0\text{ as }t\geq 1.
\end{gather*}
For $x\in \R^n$ we set
$$\phi\e(x):=\phi\left(2{|\tilde x|-\eps\kappa_1\over \eps\kappa_2}\right)\cdot\phi\left({|z|\over\eps}\right),
\quad x=(\tilde x,z)\in\R^n,$$
where $\kappa_1$ is the diameter of $B(\tilde D)$ (recall, that this notation stands for the smallest ball containing $\tilde D$; this ball has its center at  the origin),
$\kappa_2$ is the distance from $B(\tilde D)$  to  the boundary of $\tilde S$; due to \eqref{omegaD}, $\kappa_2>0$.
The function $\phi\e$ is supported on $Y\e$ and it is equal to $1$ in a neighbourhood of $D\e$.

Let $\psi\e(x)$ be the unique solution to the problem 
\begin{gather*} 
  \begin{cases}
    \Delta \psi\e(x)=0,&x\in\R^n\setminus\clo{D\e},\\
    \psi\e(x)=1,&x\in\partial  {D}\e,\\
    \psi\e(x)\to 0,&|x|\to\infty
  \end{cases}
\end{gather*}
as $n\ge 3$, or to the problem
\begin{gather}\label{capBVP2} 
  \begin{cases}
    \Delta \psi\e(x)=0,&x\in B_1\setminus\overline{D\e},\\
    \psi\e(x)=1,&x\in \partial D\e,\\
    \psi\e(x)=0,&x\in\partial B_1.
  \end{cases}
\end{gather}
as $n=2$ (recall, that $B_1$ is the unit ball  concentric with the smallest ball containing $D\e$); in the later case we extend it by zero to the whole of $\R^2$. It is known, 
that
$$
\mathrm{cap}(D\e)=\|\nabla\psi\e\|^2_{\L(\R^n\setminus\overline{D\e})}.$$

Due to a standard regularity theory,
$\psi\e$ belongs to $
\mathsf{C}^\infty(\mathbb{R}^n\setminus\overline{D\e})$ as $n\geq 3$ and to $ 
\mathsf{C}^\infty(B_1\setminus\overline{D\e})$ as $n=2$. Moreover, using  symmetry arguments, one concludes that
$\psi\e(\tilde x,z)=\psi\e(\tilde x,-z)$. Consequently,
\begin{gather}\label{psi-n1}
{\partial\psi\e\over\partial z}=0\text{ on } S\e\setminus \overline{D\e},\\
\label{psi-n2}
\left.{\partial\psi\e\over\partial z}\right|_{z=+0}=-\left.{\partial\psi\e\over\partial z}\right|_{z=-0}\text{ on } D\e,
\end{gather}
where $S\e:=\tilde S\e\times\{0\}= \left\{x=(\tilde x,z)\in\mathbb{R}^n:\ \eps^{-1}\tilde x\in   S,\  z=0\right\}$. \smallskip

Further, we will need some known pointwise estimates for the functions $\psi\e(x)$ at some positive distance from $D\e$.  

\begin{lemma}{\rm(\citep[Lemma~2.4]{MK06})}
\label{lemma-Hest}
Let  $B(D\e)$ be the smallest ball containing $D\e$.
We denote by $\rho(x)$ the
distance from $x$ to $B(D\e)$. Let $x\in\R^n\setminus\overline{B(D\e)}$ with $\rho(x)\geq C_0 d\e$ as $n\geq 3$ and  $\rho(x)\geq \exp(-C_0\sqrt{|\ln d\e|})$ as $n=2$, where $C_0$ is some positive constant. 
Then
\begin{gather*}
\begin{array}{lll}
\abs{\psi\e(x)}\leq 
C\dfrac{ (d\e)^{n-2}}{(\rho(x))^{n-2}},&
\abs{\nabla \psi\e(x)}\leq 
C\dfrac{ (d\e)^{n-2}}{(\rho(x))^{n-1}}&\text{as }n\geq 3,\\[2ex]
\abs{\psi\e(x)}\leq 
C\dfrac{ |\ln d\e|^{-1}}{|\ln  \rho(x) |^{-1} },&
\abs{\nabla \psi\e(x)}\leq 
C\dfrac{ |\ln d\e|^{-1} }{  \rho(x) }&\text{as }n=2.
\end{array}
\end{gather*}
\end{lemma}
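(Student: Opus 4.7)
The plan is to compare $\psi\e$ with the explicit capacity potential of the enclosing ball $B(D\e)$ via the maximum principle, and then extract the gradient bound from interior estimates for harmonic functions. Denote the radius of $B(D\e)$ by $r\e$, so $r\e\sim d\e$. For $n\geq 3$ I would introduce the explicit barrier $\Psi\e(y):=(r\e/|y|)^{n-2}$, which is harmonic on $\R^n\setminus\clo{B(D\e)}$, equals $1$ on $\partial B(D\e)$, and vanishes at infinity. For $n=2$ the analogous barrier is $\Psi\e(y):=\ln|y|/\ln r\e$ on the annulus $r\e\leq|y|\leq 1$, vanishing on $\partial B_1$.

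The pointwise bound on $\psi\e$ then follows in two maximum-principle steps. First, $0\leq\psi\e\leq 1$ throughout its domain of definition (boundary value $1$ on $\partial D\e$, decay at infinity or zero on $\partial B_1$). Second, $\psi\e$ and $\Psi\e$ are both harmonic in the exterior of $\clo{B(D\e)}$ (in $\R^n$ or in $B_1$, respectively), share the outer boundary behaviour, and satisfy $\psi\e\leq 1=\Psi\e$ on $\partial B(D\e)$; so $\psi\e-\Psi\e$ is nonpositive there by the maximum principle. For $\rho(x)\geq C_0 d\e$ one has $|x|\geq \rho(x)$, and plugging this into the explicit formulas for $\Psi\e$ produces exactly the stated bounds on $|\psi\e(x)|$ in both dimensional regimes.

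For the gradient bound I would apply the interior Cauchy estimate for harmonic functions: if $u$ is harmonic on $B_R(x_0)$ with $|u|\leq M$ there, then $|\nabla u(x_0)|\leq C_n M/R$. Under the hypothesis on $\rho(x)$, the ball $B_{\rho(x)/2}(x)$ is disjoint from $\clo{B(D\e)}$ (and contained in $B_1$ when $n=2$), so $\psi\e$ is harmonic there; each $y$ in this ball still satisfies $\rho(y)\geq\rho(x)/2$, so the pointwise bound propagates. For $n\geq 3$ this immediately yields the claimed gradient estimate with no loss.

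The genuinely delicate step is the $n=2$ gradient bound: the naive Cauchy estimate combined with the pointwise bound produces a superfluous factor of $|\ln\rho(x)|$, which is precisely why the hypothesis $\rho(x)\geq\exp(-C_0\sqrt{|\ln d\e|})$ appears --- yet even this hypothesis only reduces that factor to $\sqrt{|\ln d\e|}$, still $\sqrt{|\ln d\e|}$ short of the sharp target $1/(\rho(x)|\ln d\e|)$. To remove it I would split $\psi\e=|\ln r\e|^{-1}\ln|y|+w\e$, where the first term is the explicit ``principal part'' of the barrier $\Psi\e$ and $w\e$ is a bounded harmonic correction on the annulus $\{\rho(x)/4\leq |y-x|\leq 2\rho(x)\}$; rescaling this annulus to unit size, the correction $w\e$ obeys a uniform pointwise bound of order $|\ln d\e|^{-1}$ by the same maximum-principle argument, and standard interior estimates on the rescaled picture give $|\nabla w\e(x)|\leq C/(\rho(x)|\ln d\e|)$, while the explicit term contributes $|\ln d\e|^{-1}|x|^{-1}\leq C/(\rho(x)|\ln d\e|)$ directly. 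This rescaling/splitting step is the main obstacle; everything else reduces to routine applications of classical elliptic theory.
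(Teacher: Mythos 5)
The paper does not prove this lemma at all---it cites it as \citep[Lemma~2.4]{MK06}---so there is no internal proof to compare against. Your comparison-principle argument for the pointwise bounds (both dimensions) and your Cauchy-estimate argument for the $n\geq 3$ gradient bound are correct and standard. You also correctly diagnose that the $n=2$ gradient estimate is the delicate point: the naive Cauchy estimate loses a factor $|\ln\rho(x)|$.

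Your proposed repair, however, has a genuine gap. After the splitting $\psi\e = \Psi\e + w\e$ (up to your sign slip), the maximum principle gives only $-\Psi\e \leq w\e \leq 0$, i.e.\ $|w\e|\leq C|\ln\rho(x)|\,|\ln d\e|^{-1}$ on the relevant annulus, \emph{not} the claimed $|w\e|\leq C|\ln d\e|^{-1}$; rescaling does not improve this, since the bound on $w\e$ on the rescaled annulus is still $C|\ln\rho(x)|\,|\ln d\e|^{-1}$. Worse, the ``principal part'' you subtract has the wrong coefficient: the zero Fourier mode of $\psi\e$ on circles $\{|y|=r\}$ is exactly $-\tfrac{\capty(D\e)}{2\pi}\ln r$, and $\capty(D\e)$ differs from $2\pi|\ln r\e|^{-1}$ by an amount of the same order $O(|\ln d\e|^{-1})$ unless $D\e$ is a disk; so your $w\e$ retains a residual logarithmic term whose size on the annulus is again $\sim|\ln\rho(x)|\,|\ln d\e|^{-1}$, refuting the claimed bound outright. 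The step that actually closes the gap is not a maximum-principle bound but a multipole (Fourier--Laurent) expansion in the annulus $\{2r\e<|y|<1\}$: the condition $\psi\e\restriction_{\partial B_1}=0$ kills the constant term and pairs the $\pm k$ modes, the flux identity pins the $\ln$-coefficient to $-\capty(D\e)/(2\pi)=O(|\ln d\e|^{-1})$, and the remaining modes have amplitude $\lesssim (r\e/\rho)^k$ at radius $\rho$, so their contribution to $\nabla\psi\e$ is $O(r\e/\rho^2)$, which under the hypothesis $\rho(x)\geq\exp(-C_0\sqrt{|\ln d\e|})$ is negligible compared to $(|\ln d\e|\,\rho)^{-1}$. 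Without some version of this mode-separation argument (or an equivalent Green's-function representation), the stated $n=2$ gradient bound is not reached.
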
 

We will  need also another simple lemma providing us with a formula for the capacity of $D\e$.
\begin{lemma}\label{another-cap}One has
\begin{gather*} 
\capty(D\e)=2\int_{D\e}\left.{\partial \psi\e\over \partial z}\right|_{z=-0}\d\s=-2\int_{D\e}\left.{\partial \psi\e\over \partial z}\right|_{z=+0}\d\s,
\end{gather*}
where $\d \s$ is the area measure on $\partial D\e$.
\end{lemma}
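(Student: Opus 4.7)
The starting point is the identity
$\capty(D\e)=\|\nabla\psi\e\|^2_{\L(\R^n\setminus\overline{D\e})}$ (respectively $\|\nabla\psi\e\|^2_{\L(B_1\setminus\overline{D\e})}$ for $n=2$), which has already been recorded just before the statement. The plan is to rewrite this squared $\L$-norm as a boundary integral on $D\e$ via Green's formula, treating $D\e$ as a two-sided thin obstacle in the hyperplane $\{z=0\}$.

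Concretely, I would apply the divergence theorem to the vector field $\psi\e\nabla\psi\e$. For $n\geq 3$ I would take the domain to be $B_R\setminus\overline{D\e}$ with $R$ large. Since $\Delta\psi\e=0$ away from $D\e$, the volume contribution collapses to boundary terms. The contribution on $\partial B_R$ is $O(R^{2-n})$ thanks to the standard far-field decay $|\psi\e(x)|=O(|x|^{2-n})$, $|\nabla\psi\e(x)|=O(|x|^{1-n})$ (consistent with Lemma~\ref{lemma-Hest}), and hence vanishes as $R\to\infty$. For $n=2$ one applies the identity directly on $B_1\setminus\overline{D\e}$, and the outer boundary term on $\partial B_1$ is absent because of the Dirichlet condition $\psi\e=0$ there imposed by \eqref{capBVP2}.

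What remains is the integral over the two faces of the flat disk $D\e\subset\{z=0\}$. Since $\psi\e\equiv 1$ on $\overline{D\e}$, and since the outward unit normal to the complement domain points in the direction $-\hat z$ on the upper face ($z=+0$) and $+\hat z$ on the lower face ($z=-0$), I obtain
\[
\capty(D\e)=\int_{D\e}\!\Bigl(-\partial_z\psi\e\big|_{z=+0}+\partial_z\psi\e\big|_{z=-0}\Bigr)\d\s.
\]
The reflection symmetry $\psi\e(\tilde x,z)=\psi\e(\tilde x,-z)$, which is exactly relation \eqref{psi-n2}, identifies the two terms as negatives of each other, and yields simultaneously both claimed equalities.

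The only mildly delicate point is technical: Green's formula on the non-Lipschitz crack domain $\R^n\setminus\overline{D\e}$ needs justification. I would handle this by a routine exhaustion argument: remove a tubular $\delta$-neighbourhood of the $(n-2)$-dimensional rim of $D\e$, apply Green's formula on the resulting Lipschitz domain where all traces are classical, and then let $\delta\to 0$, using the finiteness $\|\nabla\psi\e\|_\L^2=\capty(D\e)<\infty$ together with the $\mathsf{C}^\infty$ regularity of $\psi\e$ on each open face of $D\e$ away from the rim. No genuine obstacle is expected here; the real content of the lemma is simply the two-sidedness of $D\e$ combined with the symmetry~\eqref{psi-n2}.
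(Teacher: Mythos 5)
Your argument is correct and is essentially the same as the paper's: both integrate by parts using the harmonicity of $\psi_\eps$, use $\psi_\eps\equiv 1$ on $D_\eps$ together with the vanishing of the outer boundary term (Dirichlet on $\partial B_1$ for $n=2$, decay from Lemma~\ref{lemma-Hest} for $n\geq 3$) to reduce the capacity to the jump of the normal derivative across the crack, and then invoke the reflection symmetry \eqref{psi-n2}. The paper states the resulting Green's identity directly and cites Lemma~\ref{lemma-Hest} for its validity, whereas you spell out the exhaustion near the rim of $D_\eps$; this is a welcome additional detail but not a different route.
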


\begin{proof}
Let $n=2$. Due to \eqref{capBVP2}, one has the following Green's formula:
\begin{gather}\label{green}
\intl_{B_1\setminus \overline{D\e}}|\nabla \psi\e|^2\d x= 
\intl_{D\e}\left(\left.{\partial \psi\e\over \partial z}\right|_{z=-0} - \left.{\partial \psi\e\over \partial z}\right|_{z=+0}\right).
\end{gather}
Combining \eqref{green} 
and \eqref{psi-n2} we obtain the desired result. 

For the case $n\geq 3$ we again use \eqref{psi-n2} and \eqref{green} (now with $\R^n\setminus \overline{D\e}$  instead of $B_1\setminus \overline{D\e} $). The fulfilment of \eqref{green} in the unbounded domain $\R^n\setminus \overline{D\e}$ is guaranteed by  Lemma~\ref{lemma-Hest}.
\end{proof}

Now, we have prepared all ingredients to define 
$J^1\e$. It is as follows:
$$(J^1\e f)(x)=
{1\over\sqrt{\mu_{n-1}(\tilde S\e)}}\begin{cases}f(z),&x=(\tilde x,z)\in\Omega\e^-\text{ with }z<-\eps,\\
f(-\eps)-{1\over 2}\psi\e(x)\varphi\e(x)\big(f(-\eps)-f(\eps)\big),&x=(\tilde x,z)\in\Omega\e^-\text{ with }-\eps\leq z\le 0, \\
f(\eps)+{1\over 2}\psi\e(x)\varphi\e(x)\big(f(-\eps)-f(\eps)\big),&x=(\tilde x,z)\in\Omega\e^+\text{ with }0\le z\leq\eps,\\
f(z),&x=(\tilde x,z)\in\Omega\e^+\text{ with }z>\eps
\end{cases}
$$
(recall that $\mu_{n-1}(\cdot)$ denotes the Lebesgue measure of a set in $\R^{n-1}$).
\smallskip

Finally, we define the operator $\check J\e^{1}$ by
\begin{gather*}
\check J\e^{1}=J\e\restr{\HS^1\e}.
\end{gather*}

Since $(J\e)^*=\check J\e$,  condition \eqref{cond2} holds with $\delta\e= 0$. Thus, it remains to check conditions \eqref{cond1} and \eqref{cond3}.

\subsubsection{Proof of~\eqref{cond1}}

It is clear that the second inequality in \eqref{cond1} holds for each $\delta\e\geq 0$. Now, we prove the first inequality.

Let  $f\in \H(\Omega\setminus\{0\})$. In particular, this implies $f\in\mathsf{L}^\infty( \Omega)$, moreover one has the following standard Sobolev-type estimate:
\begin{gather}\label{sobolev}
\|f\| _{\mathsf{L}^\infty(\Omega)}\leq C\|f\|_{\H(\Omega\setminus\{0\})}.
\end{gather}

Since $0\leq \phi\e\leq 1$, $0\le\psi\e\leq 1$ (this follows from \emph{maximum principle} for harmonic functions),  one has 
\begin{gather}\label{J1est}
|(J^1 f)(x)|\leq
{1\over\sqrt{\mu_{n-1}(\tilde S\e)}}
\max\left\{|f(-\eps)|,\,|f(\eps)|\right\},\quad x\in Y\e.
\end{gather}
Then, using \eqref{equiv-norms}, \eqref{sobolev} and the fact that $\mu_n(Y\e)=2\eps\,\mu_{n-1}(\tilde S\e)$,  we obtain the desired estimate \eqref{cond1}:
\begin{multline}\label{cond1full}
\|J\e f - J\e^1 f\|_{\HS\e}= \|J\e f - J\e^1 f\|_{\L(Y\e)}\leq 
 \|J\e f\|_{\L(Y\e)}+\|J^1\e f\|_{\L(Y\e)} \\\leq
 \sqrt{\mu_n(Y\e)}\left(\|J\e f\|_{\mathsf{L}^\infty(Y\e)}+\|J^1\e f\|_{\mathsf{L}^\infty(Y\e)}\right) 
  \leq 2\sqrt{\mu_n(Y\e)\over\mu_{n-1}(\tilde S\e)} \|f\| _{\mathsf{L}^\infty(-\eps,\eps)}  \\ \leq 2\sqrt{2}\,{\eps}^{1/2}\|f\|_{\H(\Omega\setminus\{0\})}\leq 2\sqrt{2}\,\eps^{1/2}\|f\|_{\HS^1}.
\end{multline}

\subsubsection{Proof of~\eqref{cond3}}

Let $f\in  \mathrm{dom}(\A^\gamma)$, $u\in \H(\Omega\e)$.  
One has:
\begin{gather*}
\a\e(J^1\e f,u)-\a(f, \check J\e^{1} u) =
\mathcal{I}^1\e[f,u]+\mathcal{I}^2\e[f,u]+\mathcal{I}^3\e[f,u],
\end{gather*}
where
$$
\begin{array}{l}
\mathcal{I}^0\e[f,u]:=(\nabla J^1\e f,\nabla u)_{\L(\Omega\e\setminus\overline{Y\e})}-(f', (\check J\e^{1}u)')_{\L(\Omega\setminus[-\eps,\eps])} ,\\[1mm]
\mathcal{I}^1\e[f,u]:=-\, (f', (\check J\e^{1}u)')_{\L(-\eps,0)}-(f', (\check J\e^{1}u)')_{\L(0,\eps)} ,\\[1mm]
\mathcal{I}\e^2[f,u]:= (\nabla J^1\e f,\nabla u)_{\L(Y\e)} -\ds {\gamma\over 4}(f(+0)-f(-0))\overline{(\check J\e^{1}u(+0)-\check J\e^{1}u(-0))},\\[1mm]
\mathcal{I}^3\e[f,u]:=(V\e J^1\e f,u)_{\L(\Omega\e)}-(V f, \check J^1\e u)_{\L(\Omega)} .
\end{array}
$$
 
It is easy to see, that
\begin{gather}\label{I0-est}
\mathcal{I}\e^0[f,u]=0.
\end{gather} 

Let us estimate the term $\mathcal{I}^1\e[f,u]$. 
Since $f\in \mathsf{H}^2(\Omega\setminus\{0\})$,  
 $f'\in\mathsf{L}^\infty( \Omega^\pm)$ and  
$\|f'\|_{\mathsf{L}^\infty( \Omega^\pm)} \leq C\| {f}\|_{\mathsf{H}^2(\Omega\setminus\{0\})}$. Consequently, 
\begin{gather}\label{f-est}
\|f'\|_{\L((-\eps,\eps)\setminus\{0\})} \leq C\eps^{1/2}\| {f}\|_{\mathsf{H}^2(\Omega\setminus\{0\})}.
\end{gather}

Taking into account \eqref{scale+} and \eqref{equiv-norms},   one gets
\begin{multline}\label{H2est}
\|f\|^2_{\mathsf{H}^2(\Omega\setminus\{0\})}=
\|\A^\gamma f\|_{\L(\Omega)}^2+\|f\|_{\H(\Omega\setminus\{0\})}^2\leq 
2\|\A^\gamma f+ f\|_{\L(\Omega)}^2+2\|f \|_{\L(\Omega)}^2+\|f\|_{\H(\Omega\setminus\{0\})}^2
\\ \leq 
 2\|f\|^2_{\HS^2}+ 2\|f\|^2_{\HS}+\|f\|^2_{\HS^1} \leq 5\|f\|^2_{\HS^2}.
\end{multline}

Also, we notice that $ (\check J\e u)'=J\e {\partial u\over\partial z}$, $u\in \H(\Omega\e)$. Hence, employing   \eqref{CS-P1}, we obtain:
\begin{gather}\label{CS-P+}
\|(\check  J\e^1 u)'\|_{\L(\Omega^\pm)}=\|(\check  J\e u)'\|_{\L(\Omega^\pm)}=
\left\|\check  J\e {\partial u\over\partial z}\right\|_{\L(\Omega^\pm)}
\leq
\left\| {\partial u\over\partial z}\right\|_{\L(\Omega\e)}
\leq
\|u\|_{\HS\e^1}. 
\end{gather} 

Finally, using  \eqref{f-est}-\eqref{CS-P+}, we arrive at the estimate
\begin{multline}\label{I1-est}
|\mathcal{I}^1\e[f,u]|\leq \|f'\|_{\L(-\eps,0)}\|(\check J\e^1 u)'\|_{\L(-\eps,0)}+\|f'\|_{\L(0,\eps)}\|(\check J\e^1 u)'\|_{\L(0,\eps)}\leq
C\eps^{1/2} \|f\|_{\HS^2}\| u\|_{\HS^1\e}.
\end{multline}

Now, we start to inspect the term $\mathcal{I}^2\e[f,u]$.
We denote by  $\Delta_{Y\e}^N$  the Neumann Laplacian in $Y\e$.
It is easy to see that $(J^1\e f)\restr{Y\e}\in \mathrm{dom}(\Delta_{Y\e}^N)$, this follows from the form of $J^1\e f$ and the properties \eqref{psi-n1}-\eqref{psi-n2} of the function 
$\psi\e$.  
 
We denote:
$$u\e^\pm:=\ds{1\over \mu_n(Y^\pm\e)}\int_{Y^\pm\e} u(x)\d x.$$  
Integrating by parts one gets
\begin{multline*} 
(\nabla J^1\e f,\nabla u)_{\L(Y\e)}=-(\Delta_{Y\e}^N J^1\e f, u)_{\L(Y\e)}=-(\Delta J^1\e f, u^-\e)_{\L(Y^-\e)} -(\Delta J^1\e f, u^+\e)_{\L(Y^+\e)} + R\e[f,u],
\end{multline*}
where  $R\e[f,u]$ is the remainder term,
$$R\e[f,u]:=(\Delta J^1\e f, u^-\e - u)_{\L(Y^-\e)}  + (\Delta J^1\e f, u^+\e - u)_{\L(Y^+\e)}.$$
Then, again integrating by parts and using Lemma~\ref{another-cap}, we obtain
\begin{multline}\label{nablanabla}
(\nabla J^1\e f,\nabla u)_{\L(Y\e)}=
{f( \eps)-f(-\eps)\over 2\sqrt{\mu_{n-1}(\tilde S\e)}}\overline{\left(\overline{u^+\e}
\int_{D\e}\left.{\partial \psi\e\over\partial z}\right|_{z=+0} - \overline{u^-\e}
\int_{D\e}\left.{\partial \psi\e\over\partial z}\right|_{z=-0}\d\s \right)}+ R\e[f,u]\\=
{\gamma\e\over 4}\,(f( \eps)-f(-\eps))\, \sqrt{\mu_{n-1}(\tilde S\e)}\,\,\overline{\left(u^+\e-  u^-\e\right)}+R\e[f,u]
\end{multline}
(recall, that $\gamma\e$ is defined by \eqref{gamma-eps}).
It follows from \eqref{nablanabla} that $\mathcal{I}^2\e[f,u]$ can be represented in the form
\begin{gather}\label{PQR}
\mathcal{I}\e^2[f,u]=P\e[f,u]+Q\e[f,u]+R\e[f,u], 
\end{gather}
where
\begin{gather*}
\begin{array}{lll}
P\e[f,u]&:=&\ds
{1\over 4}\left(\gamma\e-\gamma\right)(f(+0)-f(-0))\overline{(\check J\e u(+0)-\check J\e u(-0))}  
,\\[3mm]
Q\e[f,u]&:=&\ds{\gamma\e\over 4}\,\left[(f( \eps)-f(-\eps)) \sqrt{\mu_{n-1}(\tilde S\e)}\,\,\overline{(u^-\e-u^+\e)}-(f(+0)-f(-0))\overline{(\check J\e u(+0)-\check J\e u(-0))}\right]
\end{array}
\end{gather*}

Let us estimate step-by-step the terms in the right-hand-side of \eqref{PQR}.

\begin{lemma}\label{Pe}One has:
\begin{gather}
\label{Pe-ineq}
\left|P\e[f,u] \right|\leq C\left|\gamma\e-\gamma \right|\|f\|_{\HS^1}\|u\|_{\HS^1\e}.
\end{gather}
\end{lemma}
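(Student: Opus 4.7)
The plan is to observe that $P\e[f,u]$ is built from $\gamma\e - \gamma$ times a product of two boundary-trace type quantities, one in $f$ and one in $\check J\e u$, so the estimate reduces to standard $\H^1$-trace bounds together with the fact that $\check J\e$ is bounded and interacts well with derivatives.

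First I would bound the factor coming from $f$. Since $\dom(\a^\gamma) = \H(\Omega\setminus\{0\})$, the standard trace inequality gives $|f(\pm 0)| \leq C\|f\|_{\H(\Omega^\pm)}$, so that
\begin{equation*}
|f(+0) - f(-0)| \leq C\,\|f\|_{\H(\Omega\setminus\{0\})} \leq C\,\|f\|_{\HS^1},
\end{equation*}
where the last step uses the norm equivalence \eqref{equiv-norms}.

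Next I would bound the factor coming from $u$. Applying the trace inequality on each half-line to $\check J\e u$, we get $|\check J\e u(\pm 0)| \leq C\|\check J\e u\|_{\H(\Omega^\pm)}$. To control the right-hand side by the $\HS^1\e$-norm of $u$, I would exploit two facts already recorded in the paper: the contractivity of $\check J\e$ on $\L$ (see \eqref{CS-P1}) yields $\|\check J\e u\|_{\L(\Omega^\pm)} \leq \|u\|_{\L(\Omega^\pm\e)}$, while the identity $(\check J\e u)' = \check J\e (\partial u / \partial z)$ used in \eqref{CS-P+} combined with the same contractivity gives $\|(\check J\e u)'\|_{\L(\Omega^\pm)} \leq \|\partial u/\partial z\|_{\L(\Omega^\pm\e)}$. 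Hence $\|\check J\e u\|_{\H(\Omega^\pm)} \leq \|u\|_{\H(\Omega\e)} \leq \|u\|_{\HS^1\e}$, and consequently
\begin{equation*}
|\check J\e u(+0) - \check J\e u(-0)| \leq C\,\|u\|_{\HS^1\e}.
\end{equation*}

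Finally, multiplying the two bounds and the prefactor $\tfrac14|\gamma\e - \gamma|$ yields the desired inequality \eqref{Pe-ineq}. There is no real obstacle here: the lemma is a routine trace estimate, and the only thing to keep track of is that all constants are uniform in $\eps$ because the trace inequality is applied on the fixed intervals $\Omega^\pm$ (not on the $\eps$-thin waveguide), and that $\check J\e$ commutes with the $z$-derivative so no $\eps$-dependent loss is incurred on the derivative side.
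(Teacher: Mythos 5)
Your proof is correct and follows essentially the same route as the paper's (which just cites the two trace inequalities together with \eqref{CS-P1} and \eqref{CS-P+} without spelling out the details). The only thing worth noting is that you use the inequality $\|f\|_{\H(\Omega\setminus\{0\})}\leq\|f\|_{\HS^1}$ (the lower bound of \eqref{equiv-norms}, which just needs $V\geq 0$ and $\gamma\geq 0$) and $\|u\|_{\H(\Omega\e)}\leq\|u\|_{\HS^1\e}$ (which needs $V\e\geq 0$), both of which are exactly what the paper relies on implicitly.
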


\begin{proof} \eqref{Pe-ineq} follows  from 
the trace inequalities 
$
|f(\pm 0)|\leq C\|f\|_{\H(\Omega^\pm )},\ 
|\check J\e u(\pm 0)|\leq C\|\check J\e u\|_{\H(\Omega^\pm)} 
$ and \eqref{CS-P1}, \eqref{CS-P+}.  
\end{proof}

\begin{lemma}\label{Qe}
One has:
\begin{gather*}
|Q\e[f,u]|\leq   C\eps^{1/2}\|f\|_{\HS^1}\|u\|_{\HS^1\e}.
\end{gather*}
\end{lemma}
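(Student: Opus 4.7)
The plan is to reduce the two-dimensional bracket in $Q\e$ to a purely one-dimensional comparison on the short interval $(-\eps,\eps)$ through the substitution $w := \check J\e u \in \H(\Omega\setminus\{0\})$. Unfolding the definitions of $u^\pm\e$, $\check J\e$ and using $\mu_n(Y^\pm\e) = \eps\,\mu_{n-1}(\tilde S\e)$ together with $Y^\pm\e = \tilde S\e \times \pm(0,\eps)$ yields the identity $\sqrt{\mu_{n-1}(\tilde S\e)}\,u^\pm\e = \bar w^\pm$, where $\bar w^\pm := \eps^{-1}\int_{\pm(0,\eps)} w(z)\,\d z$. Thus the bracket inside $Q\e$ takes the shape $A_1 \overline{B_1} - A_2 \overline{B_2}$ with $A_1 := f(\eps)-f(-\eps)$, $A_2 := f(+0)-f(-0)$, $B_1 := \bar w^+ - \bar w^-$, $B_2 := w(+0) - w(-0)$ (up to an overall sign).

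Next I would apply the telescoping identity $A_1 B_1 - A_2 B_2 = (A_1-A_2)B_1 + A_2(B_1-B_2)$ and estimate each factor. The two \emph{small} factors gain the prefactor $\eps^{1/2}$: by the fundamental theorem of calculus and Cauchy--Schwarz,
$$|A_1 - A_2| \;\leq\; \int_{-\eps}^{\eps} |f'|\,\d z \;\leq\; \sqrt{2\eps}\,\|f'\|_{\L(\Omega\setminus\{0\})} \;\leq\; C\eps^{1/2}\|f\|_{\HS^1},$$
using \eqref{equiv-norms}, while applying the same argument separately on each half yields $|\bar w^\pm - w(\pm 0)| \leq \sqrt{\eps}\,\|w'\|_{\L(\pm(0,\eps))}$, hence
$$|B_1 - B_2| = \bigl|(\bar w^+ - w(+0)) - (\bar w^- - w(-0))\bigr| \leq C\eps^{1/2}\|w'\|_{\L(\Omega\setminus\{0\})} \leq C\eps^{1/2}\|u\|_{\HS^1\e};$$
the last step uses $(\check J\e u)' = \check J\e(\partial u/\partial z)$ together with the contractivity \eqref{CS-P1}. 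The \emph{large} factors $|A_2|$ and $|B_1|$ are uniformly bounded: the trace inequality combined with \eqref{equiv-norms} gives $|A_2| \leq C\|f\|_{\HS^1}$, and the decomposition $|B_1| \leq |B_2| + |B_1 - B_2|$, together with the trace inequality applied to $w = \check J\e u$ and the bounds \eqref{CS-P1}--\eqref{CS-P+}, yields $|B_1| \leq C\|u\|_{\HS^1\e}$.

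Combining the four estimates via the telescoping identity and using that $\gamma\e$ remains bounded (from $\gamma\e \to \gamma < \infty$) delivers $|Q\e[f,u]| \leq C\eps^{1/2}\|f\|_{\HS^1}\|u\|_{\HS^1\e}$. I do not anticipate any genuine obstacle: once the identification $\sqrt{\mu_{n-1}(\tilde S\e)}\,u^\pm\e = \bar w^\pm$ is made, the lemma collapses to a routine comparison of nearby values of $\H$-functions on an interval of length $\eps$, and the telescoping split isolates precisely the differences that carry the $\eps^{1/2}$ rate, the remaining factors being bounded uniformly by the natural form norms.
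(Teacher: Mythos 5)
Your proof is correct and reaches the same rate $\eps^{1/2}$, but the key sub\nobreakdash-estimate is handled by a genuinely different route than in the paper. The paper's proof rests on the ``trace--Poincar\'e'' estimate
\[
\bigl|\check J\e u(\pm 0) - u^\pm\e\sqrt{\mu_{n-1}(\tilde S\e)}\bigr| \leq C\eps^{1/2}\|\nabla u\|_{\L(Y\e^\pm)},
\]
which it proves by rewriting the left\nobreakdash-hand side as $\mu_{n-1}(\tilde S\e)^{-1/2}\bigl|\int_{\tilde S\e}(u(\tilde x,\pm 0)-u^\pm\e)\d\tilde x\bigr|$, then chaining Cauchy--Schwarz, the trace inequality $\|u\|_{\L(\tilde S)}\leq C\|u\|_{\H(Y^\pm)}$, and the $n$-dimensional Poincar\'e inequality on $Y^\pm$, all proved at $\eps=1$ and transferred by rescaling. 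You instead observe the exact identity $\sqrt{\mu_{n-1}(\tilde S\e)}\,u^\pm\e = \bar w^\pm$ for the one-dimensional profile $w=\check J\e u$, which turns the same quantity into $|w(\pm 0)-\bar w^\pm|$ and lets you bound it by $\sqrt{\eps}\,\|w'\|_{\L(\pm(0,\eps))}$ via the fundamental theorem of calculus and Cauchy--Schwarz in one variable. This is slightly sharper (it sees only $\partial_z u$ rather than the full gradient), requires no $(n-1)$-dimensional trace inequality and no rescaling argument, and is closer in spirit to the one-dimensional bounds $|f(\pm 0)-f(\pm\eps)|\leq\eps^{1/2}\|f'\|$. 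The subsequent telescoping split $A_1B_1-A_2B_2=(A_1-A_2)B_1+A_2(B_1-B_2)$ is exactly what the paper is implicitly doing when it lists $|f(\pm 0)-f(\pm\eps)|\leq\eps^{1/2}\|f'\|_{\L(\Omega^\pm)}$, $|f(\pm\eps)|\leq C\|f\|_{\H(\Omega^\pm)}$, $|\check J\e u(\pm 0)|\leq C\|u\|_{\H(\Omega\e^\pm)}$ and ``combines'' without further detail; you simply spell it out. Both arguments equally absorb the bounded prefactor $\gamma\e$ (using $\gamma\e\to\gamma<\infty$) and both handle the sign discrepancy in the displayed formula for $Q\e$ (a harmless typo relative to \eqref{nablanabla}) by taking absolute values.
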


\begin{proof}Let us prove  the following estimate:
\begin{gather}\label{trace-poincare}
\forall u\in \H(Y\e^\pm):\quad \left| \check J\e u(\pm 0) - {u^\pm\e} \sqrt{\mu_{n-1}(\tilde S\e)}\right|\leq C\eps^{1/2}\|\nabla u\|_{\L(Y\e^\pm)}.
\end{gather}
At first we prove \eqref{trace-poincare} for $\eps=1$.
One has (in the estimate below we omit the subscript $\eps$ since it has fixed value $1$):
\begin{multline}\label{eps0}
\left|  \check J  u(\pm 0) - {u^\pm }\sqrt{\mu_{n-1}(\tilde S)}\right| 
=
 {1\over \sqrt{\mu_{n-1}(\tilde S)}}\left|\int_{\tilde S}\left(u(\tilde x,\pm 0)-{u^\pm }\right)\d\tilde x\right| 
\leq
 \|u -{u^\pm }\| _{\L(\tilde S)}\\\leq 
C\sqrt{\|u -{u^\pm }\|^2_{\L(Y ^\pm)}+\|\nabla u  \|^2_{\L(Y ^\pm)}}\leq C_1\|\nabla u  \| _{\L(Y ^\pm)}.
\end{multline}
Here the first estimate is the Cauchy-Schwarz inequality, the second one is the trace inequality and the third one is the Poincar\'{e} inequality. 
Since  
$$Y\e^\pm={\eps  } Y^\pm,\quad \tilde S\e={\eps  } \tilde S,\quad \eps >0,$$  for an  arbitrary $\eps$ \eqref{trace-poincare} follows from \eqref{eps0} via simple rescaling arguments.\smallskip

Now, using \eqref{trace-poincare} and taking into account that 
\begin{gather*}
|f(\pm 0)-f(\pm\eps)|\leq \eps^{1/2}\|f'\|_{\L(\Omega^\pm)},\quad |f(\pm \eps)|\leq C\|f\|_{\H(\Omega^\pm)},\\ |\check J\e u(\pm 0)|\leq C\|\check J\e u \|_{\H(\Omega^\pm)}\leq C\|u\|_{\H(\Omega\e^\pm)},
\end{gather*} 
we obtain the desired estimate.
\end{proof}

\begin{lemma}\label{Re}One has
\begin{gather*}
|R\e[f,u]|\leq C\|f\|_{\HS^1} \|u\|_{\HS^1\e}\begin{cases}
\eps^{1/2} ,&n\geq 3,
\\[3mm]
\eps^{1/2} |\ln \eps| ,&n=2.
\end{cases}
\end{gather*}
\end{lemma}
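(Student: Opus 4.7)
The plan is to isolate $\Delta J^1\e f$ on each $Y^\pm\e$, exploit the fact that the corrector $\psi\e$ is harmonic off the window, and combine pointwise bounds for $\psi\e,\nabla\psi\e$ (from Lemma~\ref{lemma-Hest}) with a Poincar\'e inequality on $Y^\pm\e$ applied to $u-u^\pm\e$. Since $u^\pm\e$ is the mean of $u$ over $Y^\pm\e$ and the Poincar\'e constant scales like $\eps$, one has $\|u-u^\pm\e\|_{\L(Y^\pm\e)}\le C\eps\,\|\nabla u\|_{\L(Y^\pm\e)}$, and thus
\begin{gather*}
|R\e[f,u]|\le C\eps\bigl(\|\Delta J^1\e f\|_{\L(Y^-\e)}+\|\Delta J^1\e f\|_{\L(Y^+\e)}\bigr)\|\nabla u\|_{\L(Y\e)}.
\end{gather*}

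From the explicit form of $J^1\e f$ we read
\begin{gather*}
\Delta J^1\e f=\mp\frac{f(-\eps)-f(\eps)}{2\sqrt{\mu_{n-1}(\tilde S\e)}}\,\Delta(\psi\e\phi\e)\quad\text{on }Y^\pm\e,
\end{gather*}
and the harmonicity of $\psi\e$ on $Y^\pm\e\setminus\overline{D\e}$ reduces this to $\Delta(\psi\e\phi\e)=2\nabla\psi\e\cdot\nabla\phi\e+\psi\e\Delta\phi\e$. The crucial geometric point is that $\nabla\phi\e$ and $\Delta\phi\e$ are supported on a set $K\e\subset Y\e$ whose distance from $B(D\e)$ is $\ge c\eps$ and whose volume is $\le C\eps^n$, with $|\nabla\phi\e|\le C\eps^{-1}$ and $|\Delta\phi\e|\le C\eps^{-2}$. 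On $K\e$ the bounds of Lemma~\ref{lemma-Hest} apply and, after a direct combination of pointwise estimates with $\mu_n(K\e)\le C\eps^n$, yield
\begin{gather*}
\|\Delta(\psi\e\phi\e)\|_{\L(Y^\pm\e)}\le C\,\begin{cases}(d\e)^{n-2}\eps^{-n/2},&n\ge 3,\\[1mm]|\ln\eps|\,(\eps|\ln d\e|)^{-1},&n=2.\end{cases}
\end{gather*}

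Finally, the assumption $\gamma<\infty$ and \eqref{cap-as1}--\eqref{cap-as2} force $(d\e)^{n-2}\le C\eps^{n-1}$ for $n\ge 3$, and $|\ln d\e|^{-1}\le C\eps$ for $n=2$. Plugging these into the previous displays, using $\mu_{n-1}(\tilde S\e)\sim \eps^{n-1}$, and bounding $|f(-\eps)-f(\eps)|\le C\|f\|_{\H(\Omega\setminus\{0\})}\le C\|f\|_{\HS^1}$ by the 1D Sobolev/trace inequality (the $\delta'$-jump at $0$ is absorbed into $\|f\|_{\HS^1}$), one collects the factors: for $n\ge 3$, $\eps\cdot \eps^{-(n-1)/2}\cdot (d\e)^{n-2}\eps^{-n/2}\le C\eps^{1/2}$, while for $n=2$, $\eps\cdot \eps^{-1/2}\cdot|\ln\eps|(\eps|\ln d\e|)^{-1}\le C\eps^{1/2}|\ln\eps|$, which is the claim.

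The step I expect to be the main obstacle is the estimate on $\|\Delta(\psi\e\phi\e)\|_{\L(Y^\pm\e)}$: because $\psi\e$ and $\nabla\psi\e$ blow up as $x\to D\e$, the proof only goes through because the cut-off $\phi\e$ is constant on a neighbourhood of $D\e$, so that $\nabla\phi\e$ and $\Delta\phi\e$ live on the safer set $K\e$ where Lemma~\ref{lemma-Hest} gives sharp bounds. Balancing these bounds against the capacity scaling relation fixing $\gamma\e=O(1)$ is precisely what produces the exponents appearing in \eqref{delta}.
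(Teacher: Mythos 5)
Your proof is correct and follows essentially the same line as the paper's: isolate $\Delta J^1_\eps f = \pm\frac{f(-\eps)-f(\eps)}{2\sqrt{\mu_{n-1}(\tilde S\e)}}(2\nabla\psi\e\cdot\nabla\phi\e + \psi\e\Delta\phi\e)$ using harmonicity of $\psi\e$, observe the support lies in a shell at distance $\sim\eps$ from $B(D\e)$, apply Lemma~\ref{lemma-Hest}, invoke the Poincar\'e inequality $\|u-u^\pm\e\|_{\L(Y^\pm\e)}\le C\eps\|\nabla u\|_{\L(Y^\pm\e)}$, and close the estimate using the capacity scaling $(d\e)^{n-2}\le C\eps^{n-1}$ (resp.\ $|\ln d\e|^{-1}\le C\eps$) forced by $\gamma<\infty$. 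The only cosmetic difference is that you compute the $\L$-norm of $\Delta J^1\e f$ directly over the support set $K\e$, whereas the paper bounds $|Y^\pm\e|^{1/2}\,\|\Delta J^1\e f\|_{\mathsf{L}^\infty(Y^\pm\e)}$; these give identical exponents.
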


\begin{proof} 
Since $\Delta\psi\e=0$, we have 
$$(\Delta J^1\e f)\restr{Y^\pm\e}= 
\pm{1\over 2\sqrt{\mu_{n-1}(\tilde S\e)}}(f(-\eps)-f(\eps))(2\nabla\psi\e\cdot \nabla\phi\e + \psi\e \Delta\phi\e).$$ 

Let $G\e:=\{x:\ \nabla\phi\e(x)=0\}$. It is clear that $|x|\geq C\eps$ for $x\in G\e$. Using Lemma~\ref{lemma-Hest}, we get the  
pointwise estimate  
\begin{gather}\label{pointwise1}
|(\Delta J^1\e f)(x)|\leq 
C\eps^{1-3n\over 2}\|f\|_{\mathsf{L}^\infty(\Omega)}\begin{cases}
\ds{(d\e)^{n-2}},&n\geq 3,
\\[3mm]
\ds{|\ln \eps|\cdot |\ln d\e|^{-1}},&n=2,
\end{cases}\text{ as } x\in G\e,
\end{gather}
Moreover,
\begin{gather}
\label{pointwise1+}
\Delta J^1\e f=0\text{ as }  x\notin G\e.
\end{gather} 
 
Due to \eqref{gamma},  \eqref{cap-as1}, \eqref{cap-as2} (recall, that $\gamma<\infty$),
\begin{gather*}
(d\e)^{n-2}\leq C\eps^{n-1}\text{ as }n\geq 3;\quad |\ln d\e|^{-1}\leq C\eps\text{ as }n=2.
\end{gather*}
Then \eqref{pointwise1}-\eqref{pointwise1+} become
\begin{gather}\label{pointwise2}
|(\Delta J^1\e f)(x)|\leq 
C\|f\|_{\mathsf{L}^\infty(\Omega)}\begin{cases}
\eps^{-{1+n\over 2}},&n\geq 3,
\\[3mm]
\eps^{-{1+n\over 2}}|\ln \eps| ,&n=2.
\end{cases}
\end{gather}

We also have the following Poincar\'e inequality:
\begin{gather}\label{poincare}
\|u-u^\pm\e\|_{\L(Y^\pm\e)}\leq C\eps  \|\nabla u\| _{\L(Y\e^\pm)}.
\end{gather}
Combining \eqref{pointwise2} and \eqref{poincare}  we obtain:
\begin{multline*}
|(\Delta J^1\e f, u^\pm\e - u)_{\L(Y^\pm\e)}| \leq 
|Y^\pm\e|^{1/2}\cdot\|\Delta J^1\e f\|_{\mathsf{L}^\infty(Y^\pm\e)}\cdot \|u^\pm\e - u\|_{\L(Y^\pm\e)}\\ \leq 
C\|f\|_{\mathsf{L}^\infty(\Omega)}\|\nabla u\|_{\L(\Omega^\pm\e)}
\begin{cases}
\eps^{1/2} ,&n\geq 3,
\\[3mm]
\eps^{1/2} |\ln \eps| ,&n=2.
\end{cases}
\end{multline*}
The lemma is proved since 
$\|f\|_{\mathsf{L}^\infty(\Omega)}\leq \|f\|_{\HS^1}$ and $\|\nabla u\|_{\L(\Omega)}\leq \|u\|_{\HS\e^1}$.
\end{proof}

Lemmata~\ref{Pe}-\ref{Re} give   
\begin{gather}
\label{I2-est}
|\mathcal{I}^2\e[f,u]|\leq 
\begin{cases}
C\left(\eps^{1/2}|\ln\eps|+|\gamma\e-\gamma|\right) \|f\|_{\HS^1} \|u\|_{\HS^1\e},&n=2,\\
C\left(\eps^{1/2}+|\gamma\e-\gamma|\right) \|f\|_{\HS^1} \|u\|_{\HS^1\e},&n\ge 3.
\end{cases}
\end{gather} 
\medskip 

It remains to estimate the term $\mathcal{I}\e^3$. One has:
\begin{multline}\label{I3}
\mathcal{I}\e^3[f,u]=(V\e J\e f,u)_{\L(\Omega\e)}-(V f, \check J\e u)_{\L(\Omega)}\\+
(V\e (J^1\e f - J\e f),u)_{\L(\Omega\e)}+(V f, (\check J\e-\check J^1\e) u)_{\L(\Omega)}.
\end{multline}
Since $\check J^1\e=\check J\e\restriction_{\mathcal{H}\e^1}$, we get
\begin{gather}\label{I3a}
(V f, (\check J\e-\check J^1\e) u)_{\L(\Omega)}=0 
\end{gather} 
Using \eqref{supV} and   \eqref{cond1full} we obtain:
\begin{gather}\label{I3b}
|(V\e (J^1\e f - J\e f),u)_{\L(\Omega\e)}|\leq \|V\e\|_{\mathsf{L}^\infty(\Omega\e)}\|J^1\e f - J\e f\|_{\L(\Omega)}\|u\|_{\L(\Omega\e)}\leq C\eps^{1/2} \|f\|_{\mathcal{H}^1}\|u\|_{\mathcal{H}\e}.
\end{gather}
Finally, we examine the first two terms in \eqref{I3}. Using
\begin{gather}
\label{Iprop}
(\check J\e)^*=J\e,\quad J\e (Vf)=\sqrt{\mu_{n-1}(\tilde S\e)}\cdot J\e V\cdot J\e f
\end{gather}
and \eqref{CS} we get:
\begin{multline} \label{I3c}
\left|(V\e J\e f,u)_{\L(\Omega\e)}-(V f, \check J\e u)_{\L(\Omega)}\right|=
\left| \left((V\e -  \sqrt{\mu_{n-1}(\tilde S\e)} J\e V ) J\e f,  u\right)_{\L(\Omega\e)}\right|\\
\leq \left\|V\e -  \sqrt{\mu_{n-1}(\tilde S\e)} J\e V\right\|_{\mathsf{L}^\infty(\Omega\e)}
\| f\|_{\L(\Omega)}\| u \|_{\L(\Omega\e)}.
\end{multline}
Combining \eqref{I3a}, \eqref{I3b}, \eqref{I3c} we arrive at 
\begin{gather}\label{I3-est}
|\mathcal{I}^3\e[f,u]|\leq \left(C\eps^{1/2}+ \left\|V\e -  \sqrt{\mu_{n-1}(\tilde S\e)} J\e V\right\|_{\mathsf{L}^\infty(\Omega\e)}\right) \|f\|_{\HS^1} \|u\|_{\HS\e},
\end{gather}

Condition \eqref{cond3} follows from \eqref{I0-est}, \eqref{I1-est}, \eqref{I2-est}, \eqref{I3-est}.\medskip

Thus,  \eqref{cond1}-\eqref{cond3} hold true and, as we noticed above, this implies the fulfilment of 
Theorems~\ref{th1}--\ref{th2} for the case $\gamma<\infty$. In the next subsection we treat the case $\gamma=\infty$.

\subsection{The case $\gamma=\infty$\label{subsec42}}

In contrast to the case $\gamma<\infty$, now we define the operator $\check J\e^{1}$ using nontrivial expression, while $J\e^{1}$ is relatively simple:
\begin{gather*}
J\e^{1}=J\e\restr{\HS^1},\\[1mm]
\check J\e^{1} u(z)=
\begin{cases}
\check J\e  u(z),&z\in (L_-,L_+)\setminus (-\eps,\eps),\\
 {1\over 2\eps}\left(\check J\e  u(\eps)\cdot(z+\eps) - \check J\e  u(-\eps)\cdot(z-\eps)\right),&z\in [-\eps,\eps].
\end{cases}
\end{gather*}

Recall, that condition \eqref{cond2} holds with any $\delta\e\geq 0$ since $(J\e)^*=\check J\e$.
Thus, we only have to check conditions~\eqref{cond1} and~\eqref{cond3}.

The following lemma is essential for further considerations.

\begin{lemma}\label{lemma-diff}
Let $u\in \H(Y^\pm\e)$. Then the following inequlity holds:
\begin{gather*}
\left|{1\over \mu_{n-1}(\tilde S\e)}\int_{\tilde S\e}u(\tilde x,\pm 0)\d \tilde x - {1\over \mu_{n-1}(\tilde D\e)}\int_{\tilde D\e}u(\tilde x,0)\d \tilde x \right|\leq C\|\nabla u\| _{\L(Y\e^\pm)}\cdot
\begin{cases}
(d\e)^{2-n\over 2},& n\ge 3,\\
|\ln d\e|^{1/2},& n=2.
\end{cases}
\end{gather*}
\end{lemma}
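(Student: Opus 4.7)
The plan is to reduce to a scale-invariant statement by setting $\tau := d\e/\eps \in (0,1]$ and rescaling via $x = \eps y$; after this change of variable the gradient norm scales as $\|\nabla u\|_{\L(Y\e^\pm)} = \eps^{(n-2)/2}\|\nabla w\|_{\L(Y^\pm)}$, where $Y^+ := \tilde S\times(0,1)$ (and $Y^-$ analogously) and $w(y) := u(\eps y)$, whereas the two averages are invariant under the rescaling. Thus it suffices to show that for every $w\in \H(Y^\pm)$,
\begin{equation*}
\left|\frac{1}{\mu_{n-1}(\tilde S)}\int_{\tilde S} w(\tilde y,\pm 0)\,\d\tilde y - \frac{1}{\mu_{n-1}(\tau\tilde D)}\int_{\tau\tilde D} w(\tilde y,0)\,\d\tilde y\right| \le C f_n(\tau) \|\nabla w\|_{\L(Y^\pm)},
\end{equation*}
with $f_n(\tau) = \tau^{(2-n)/2}$ for $n\ge 3$ and $f_n(\tau) = |\ln\tau|^{1/2}$ for $n=2$; unwinding the rescaling, and using $|\ln(d\e/\eps)| \le |\ln d\e|$ in the case $n=2$ (since $d\e\le\eps<1$), then yields the stated bound.

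Since both averages annihilate constants, I would first replace $w$ by $w - \bar w$, where $\bar w$ is its mean over $Y^\pm$, and invoke the Poincar\'e inequality $\|w-\bar w\|_{\H(Y^\pm)}\le C\|\nabla w\|_{\L(Y^\pm)}$, so it remains to bound each of the two averages of $w-\bar w$ separately. The large-face average is controlled by trace plus Cauchy--Schwarz: $|\langle w-\bar w\rangle_{\tilde S\times\{0\}}|\le C\|w-\bar w\|_{\H(Y^\pm)}\le C\|\nabla w\|_{\L(Y^\pm)}$, which is absorbed into $f_n(\tau)\|\nabla w\|$ because $f_n(\tau)\ge 1$ for $\tau\in(0,1]$. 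The main task is then to estimate the average of $w-\bar w$ over the small set $\tau\tilde D$.

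For this small-set bound I would combine the trace inequality $\|w(\cdot,\pm 0)\|_{\mathsf{H}^{1/2}(\tilde S)}\le C\|w\|_{\H(Y^\pm)}$ with a Sobolev embedding on $\tilde S\subset\R^{n-1}$ and H\"older's inequality against the indicator of $\tau\tilde D$. For $n\ge 3$ the critical embedding $\mathsf{H}^{1/2}(\tilde S)\hookrightarrow \mathsf{L}^p(\tilde S)$ with $p=2(n-1)/(n-2)$ gives $\bigl|\int_{\tau\tilde D}(w-\bar w)\bigr|\le \|w-\bar w\|_{\mathsf{L}^p(\tilde S)}\mu_{n-1}(\tau\tilde D)^{1-1/p}$, and since $1-1/p=n/(2(n-1))$ and $\mu_{n-1}(\tau\tilde D)=\tau^{n-1}\mu_{n-1}(\tilde D)$, division by $\mu_{n-1}(\tau\tilde D)$ produces exactly the factor $\tau^{(2-n)/2}$. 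For $n=2$ the critical exponent degenerates, and I would instead use the subcritical embeddings $\mathsf{H}^{1/2}(\tilde S)\hookrightarrow \mathsf{L}^p(\tilde S)$, valid for every $p<\infty$, with the Moser--Trudinger-type growth $\|w\|_{\mathsf{L}^p(\tilde S)}\le C\sqrt{p}\,\|w\|_{\mathsf{H}^{1/2}(\tilde S)}$; choosing $p\sim |\ln\tau|$ in H\"older balances $\sqrt{p}$ against $\tau^{-1/p}=O(1)$ and yields the $|\ln\tau|^{1/2}$ factor.

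The main obstacle I foresee is the case $n=2$: the failure of the critical Sobolev embedding forces one either to track the explicit $\sqrt{p}$ constant in the subcritical embeddings and optimize the exponent, or alternatively to construct an ad hoc logarithmic test function from the 2D Newtonian potential (which is also consistent with $\capty(D\e)\sim |\ln d\e|^{-1}$ and Lemma~\ref{lemma-Hest}). All remaining steps (rescaling, Poincar\'e, trace, and H\"older) are routine.
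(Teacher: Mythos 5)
The paper does not actually spell out a proof here; it only cites \citep[Lemma~2.1.I]{K13}, which (being a homogenization paper) works with the capacitary potential $\psi\e$ directly — this is consistent with the appearance of $\capty(D\e)^{-1/2}$ on the right-hand side, since $(d\e)^{(2-n)/2}\sim\capty(D\e)^{-1/2}$ for $n\ge3$ and $|\ln d\e|^{1/2}\sim\capty(D\e)^{-1/2}$ for $n=2$. Your route is genuinely different: instead of an explicit test function you rescale to $\eps=1$, subtract the mean, apply the trace theorem to land in $\mathsf{H}^{1/2}(\tilde S)$, and then use fractional Sobolev embedding plus H\"older against the indicator of $\tau\tilde D$. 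The calculations in the rescaling and in the $n\ge3$ branch are correct: $\mu_{n-1}(\tau\tilde D)^{-1/p}$ with $p=2(n-1)/(n-2)$ gives exactly $\tau^{(2-n)/2}$, and unwinding recovers $(d\e)^{(2-n)/2}$ because $\eps^{-(n-2)/2}$ cancels.

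Two points deserve attention. First, the claim that ``$f_n(\tau)\ge1$ for $\tau\in(0,1]$'' fails for $n=2$ once $\tau>1/e$; the intermediate target should really be $C\bigl(1+f_n(\tau)\bigr)\|\nabla w\|$. This is harmless for the final bound (for $n\ge3$ because $\tau\le1$ makes $1\le\tau^{(2-n)/2}$, and for $n=2$ because the extra $+1$ is absorbed into $|\ln d\e|^{1/2}$ once $\eps$ is small), but as stated the $n=2$ intermediate inequality is literally false when $\tau$ is close to $1$, where the left side need not vanish. Second, the crux of the $n=2$ branch is the growth rate $\|v\|_{\mathsf{L}^p}\le C\sqrt{p}\,\|v\|_{\mathsf{H}^{1/2}}$, which you assert but do not justify. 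It is indeed true (and in fact elementary: extend to $\R$, then Hausdorff--Young plus H\"older in Fourier gives $\|v\|_{\mathsf{L}^p}\le C_r\|\hat v(1+|\xi|)^{1/2}\|_{\mathsf{L}^2}$ with $C_r=\bigl(\frac{4}{r-2}\bigr)^{1/r}$ for $\frac1r=\frac12-\frac1p$, and $C_r\sim\sqrt p$ as $p\to\infty$); but note that a naive route through $\mathrm{BMO}$ and John--Nirenberg only yields a linear $p$, which would spoil the exponent $1/2$ in $|\ln d\e|^{1/2}$. You should spell this computation out, or invoke the explicit capacitary potential as you suggest in your last paragraph, since that gives both $n\ge3$ and $n=2$ uniformly and matches the paper's intended machinery (Lemma~\ref{lemma-Hest} and Lemma~\ref{another-cap}).

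Overall: the argument is correct and takes a cleaner, more functional-analytic path than the capacity-potential approach the paper points to. The tradeoff is that your route needs the sharp exponent-dependence of the $\mathsf{H}^{1/2}\hookrightarrow\mathsf{L}^p$ embedding for $n=2$, which is nontrivial enough that it should not be left implicit.
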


The proof this lemma is similar to the proof of \citep[Lemma~2.1.I]{K13}.

\subsubsection{Proof of~\eqref{cond1}}
We have to check only the second inequality in \eqref{cond1}. The first one holds since its left-hand-side equals zero.

Let  $u\in \H(\Omega\e)$. It is clear that 
$$ 
\|\check J\e^{1}u\|_{\mathsf{L}^\infty(-\eps,\eps)}\leq
\max\{|\check J\e u(-\eps)|,\,|\check J\e u(\eps)|\}\leq
\|\check J\e u \|_{\mathsf{L}^\infty(-\eps,\eps)}\leq C\|\check J\e u\|_{\H(\Omega)}
.$$ Using this and taking   into account \eqref{CS-P1}, \eqref{CS-P+} we obtain:
\begin{multline}\label{cond1full+}
\|\check  J\e u-\check J\e^{1} u\|_{\HS}\leq \| \check J\e  u\|_{\L(-\eps,\eps)} +
\|\check J\e^{1} u\|_{\L(-\eps,\eps)}
\leq C\eps^{1/2}\left(\|\check J\e  u\|_{\mathsf{L}^\infty(-\eps,\eps)}+\|\check J\e^{1} u\|_{\mathsf{L}^\infty(-\eps,\eps)}\right)
\\\leq  C_1 \eps^{1/2}\|\check J\e u\|_{\H(\Omega)} \leq C_2\eps^{1/2}\|u\|_{\H(\Omega\e)}\leq C_3\eps^{1/2}\|u\|_{\HS^1\e}
\end{multline} 
and we arrive at the desired condition \eqref{cond1}.

\subsubsection{Proof of~\eqref{cond3}} 
Let $f\in  \mathrm{dom}(\A^\infty)$, $u\in \H(\Omega\e)$.  
One has:
\begin{multline*} 
 \a\e(J^1\e f,u)-\a(f, \check J\e^{1} u) 
\\=
\underset{\mathcal{I}^{1,-}\e:=}{\underbrace{\int_{\Omega\e^-}\nabla(J\e f)\cdot\overline{\nabla u}\d x - \int_{\Omega^-}f'\cdot\overline{(\check J\e u)'}\d z}} +
\underset{\mathcal{I}^{1,+}\e:=}{\underbrace{\int_{\Omega\e^+}\nabla(J\e f)\cdot\overline{\nabla u}\d x - \int_{\Omega^+}f'\cdot\overline{(\check J\e u)'}\d z}}\\+
\underset{\mathcal{I}^{2}\e:=}{\underbrace{\int_{\Omega }f'\cdot\overline{(\check J\e u - \check J^1\e u)'}\d  z}}+\underset{\mathcal{I}^{3}\e:=}{\underbrace{\int_{\Omega\e} V\e\cdot J\e^1 f\cdot \overline{u}\d x-
\int_{\Omega}V\cdot f \cdot\overline{\check J^1\e u}   \d  z}}.
\end{multline*}
 
It is clear that 
\begin{gather*}
\mathcal{I}^{1,-}\e=\mathcal{I}^{1,+}\e=0. 
\end{gather*}
Integrating by parts  we get:
\begin{gather*}
\mathcal{I}^{2}\e=-\int_{-\eps}^\eps f''\cdot\overline{(\check J\e u - \check J^1\e u)}\d z - f'(0)\cdot\overline{(\check J\e u( +0) - \check J\e u(-0))},
\end{gather*}
and thus, due to the trace inequality $|f'(0)|\leq C\|f'\|_{\mathsf{H}^1(\Omega)}$, 
\begin{gather}\label{est1}
|\mathcal{I}^{2}\e|\leq \|f''\|_{\L(\Omega)} \cdot \|\check J\e u - \check J^1\e u\|_{\L(\Omega)}+
C\|f'\|_{\H(\Omega)}\cdot\left|\check J\e u(+0) - \check J \e u(-0)\right|.
\end{gather}
The first term in \eqref{est1} can be estimated using \eqref{cond1full+}: 
\begin{gather}\label{first-term}
\|f''\|_{\L(\Omega)} \cdot\|\check J\e u - \check J^1\e u\|_{\L(\Omega)}\leq
C\eps^{1/2}\|f\|_{\HS^2}\|u\|_{\HS^1\e}.
\end{gather}
Now, we estimate the second term.
Using Lemma~\ref{lemma-diff} and taking into account \eqref{cap-as1}-\eqref{cap-as2}, we get
\begin{multline}\label{JJ2}
\left|\check  J\e u (+0)-\check  J\e u (-0)\right|=
{ \sqrt{\mu_{n-1}(\tilde S\e)}}\left|
{1\over  {\mu_{n-1}(\tilde S\e)}}\int_{\tilde S\e}u(\tilde x,0)\d \tilde x-
{1\over  {\mu_{n-1}(\tilde S\e)}}
\int_{\tilde S\e}u(\tilde x,-0)\d \tilde x
\right|\\
\leq C(\gamma\e)^{-1/2}\|\nabla u\|_{\L(Y\e)}.
\end{multline}

Combining \eqref{est1}-\eqref{JJ2} we obtain:
\begin{gather}\label{I2est-infty}
|\mathcal{I}^{2}\e|\leq C\left((\gamma\e)^{-1/2}+\eps^{1/2}\right)\,\|f\|_{\HS^2} \cdot \|u\|_{\HS^1}.
\end{gather}

Finally, we estimate the term $\mathcal{I}^{3}\e$. 
Using  \eqref{CS}, \eqref{Iprop}, \eqref{cond1full+}  and taking into account that $J\e^1=J\e\restriction_{\HS^1}$ we obtain
\begin{multline}\label{I3est-infty}
|\mathcal{I}^3\e|=\left|(V\e J\e f,u)_{\L(\Omega\e)}-(V f, \check J\e u)_{\L(\Omega)}+
(V f, (\check J \e-\check J^1\e) u)_{\L(\Omega)}\right|\\=
\left|\big((V\e -  \sqrt{\mu_{n-1}(\tilde S\e)} J\e V ) J\e f,  u\big)_{\L(\Omega\e)}+
(V f, (\check J \e-\check J^1\e) u)_{\L(\Omega)}\right|
\\\leq 
\left\|V\e -  \sqrt{\mu_{n-1}(\tilde S\e)} J\e V\right\|_{\mathsf{L}^\infty(\Omega\e)}
\|J\e f\|_{\L(\Omega\e)}\|u\|_{\L(\Omega\e)}\\+
\|V\|_{\mathsf{L}^\infty(\Omega)}\|f\|_{\L(\Omega)}\|(\check J^1\e-\check J\e) u\|_{\L(\Omega)}\leq
\left( \left\|V\e -  \sqrt{\mu_{n-1}(\tilde S\e)} J\e V\right\|_{\mathsf{L}^\infty(\Omega\e)}+C\eps^{1/2}\right)\|f\|_{\HS}\|u\|_{\HS^1\e}.
\end{multline}
Estimates \eqref{I2est-infty}-\eqref{I3est-infty} imply \eqref{cond3}. 

\subsection{Proof of Theorem~\ref{th3}\label{subsec43}}

To prove Theorem~\ref{th3} we
 apply Theorem \ref{thm:IOS} with $B\e:=(\A\e+\Id)^{-1}$, $B:=(\A^\gamma+\Id)^{-1}$.
These operators are non-negative, self-adjoint and compact (the last property is due to $L_->-\infty,\ L_+<\infty$). Moreover
$\|B\e\|\leq 1$, thus condition~$A_2$ of Theorem \ref{thm:IOS} is
fulfilled. In fact, $B\e$ are not only bounded uniformly in $\eps$ as   operators from $\L(\Omega\e)$ to $\L(\Omega\e)$, but one has even stronger property, namely
\begin{gather}\label{A2+}
\|B\e f\|_{\H(\Omega\e)}\leq\|f\|_{\L(\Omega\e)}.
\end{gather}

Again we define the operator $J\e$ by \eqref{J}. 
Due to~\eqref{CS}, condition~$A_1$ of Theorem \ref{thm:IOS} is valid. Moreover, Theorem~\ref{th1} implies the fulfilment of  condition~$A_3$. 

It remains to check condition $A_4$. Let $\{f\e\in \L(\Omega\e)\}_\eps$ be a family with $\sup_{\eps}
\|f\e\|_{\L(\Omega\e)}<\infty$.
Then the family $\{\|B\e
f\e\|_{\H(\Omega\e)}\}_\eps$ is  bounded due to \eqref{A2+}.  
One has also $\|\check J\e u\|_{\H(\Omega\setminus\{0\})}\leq \|u\|_{\H(\Omega\e)}$, see~\eqref{CS-P1},\eqref{CS-P+} (recall that the
operator $\check J\e$ is defined by~\eqref{J'}). Then
the sequence
$\{\check J\e B\e f\e\}_\eps$ is also bounded in $\H(\Omega\setminus\{0\})$.
Using Rellich's embedding
theorem we conclude that
there exist $w\in \H(\Omega\setminus\{0\})$ and a sequence $(\eps_m)_{m\in\N}$ such
that 
\begin{gather}\label{A4+}
\|\check J_{\eps_m} B_{\eps_m} f_{\eps_m}-w\|_{\L(\Omega)}\to 0\text{ as
 }\eps_m\to 0.
\end{gather}

One has:
\begin{multline}\label{A4-main}
\|  B_{\eps_m}  f_{\eps_m}-J_{\eps_m}w\|_{\L(\Omega_{\eps_m})}\leq 
\| B_{\eps_m}  f_{\eps_m} - J_{\eps_m}\check J_{\eps_m}  B_{\eps_m}  f_{\eps_m} \|_{\L(\Omega_{\eps_m})}+
\| J_{\eps_m}\check J_{\eps_m}  B_{\eps_m}  f_{\eps_m}-J_{\eps_m}w\|_{\L(\Omega_{\eps_m})}
\\\leq\|\Id - J_{\eps_m}\check J_{\eps_m}\|_{\H(\Omega_{\eps_m})\to \L(\Omega_{\eps_m})}\cdot\|B_{\eps_m}\|_{\L(\Omega_{\eps_m})\to \H(\Omega_{\eps_m})}\cdot \|f_{\eps_m}\|_{\L(\Omega_{\eps_m})}\, +\,\|J_{\eps_m}\|_{\L(\Omega )\to \L(\Omega_{\eps_m})}\cdot \|\check J_{\eps_m} B_{\eps_m} f_{\eps_m}-w\|_{\L(\Omega)}.
\end{multline}

Finally, for any  $u\in \H(\Omega\e)$ we get, using the Poincar\'e inequality on $\tilde S\e$,  
\begin{multline*}
\|u - J\e \check J\e u\|^2_{\L(\Omega\e)}= \int_{\Omega_-}\int_{\tilde S\e}\left|u(\tilde x,z)-{1\over \mu_{n-1}(\tilde S\e)}\int_{\tilde S\e}u(\tilde y,z)\d \tilde y\right|^2\d \tilde x \d z\\\leq C\eps^2\int_{\Omega }\int_{\tilde S\e}|\nabla_{\tilde x} u|^2\d \tilde x\d z\leq  C  \eps^2 \|u\|^2_{\H(\Omega\e)},
\end{multline*} 
and thus 
\begin{gather}\label{cond-add}
\|\Id - J_{\eps_m}\check J_{\eps_m}\|_{\H(\Omega\e)\to \L(\Omega\e)}\to 0.
\end{gather}
Condition $A_4$ follows immediately from \eqref{CS}, \eqref{A2+}-\eqref{cond-add}.
\smallskip

Combining Theorems~\ref{th1},\ref{thm:IOS} and already proved Theorem~\ref{th1} we arrive at the
estimate \eqref{spectrum2}. 
Evidently, \eqref{spectrum1} follows from \eqref{spectrum2}.
Theorem~\ref{th3} is proven.

\section*{Acknowledgment}

A.K. is supported by the Austrian Science Fund (FWF) under Project No. M~2310-N32.
G.C. is a member of GNAMPA (INdAM). The research was also supported by grant FFABR 2017 (Finanziamento annuale individuale delle attività base di ricerca).

\addcontentsline{toc}{section}{References}

\section*{References}

\end{document}